\newcommand{\md}{\,{\rm d}}
\newcommand{\Ab}{\mathbf A}
\newcommand{\R}{\mathbb R}
\newcommand{\C}{\mathbb C}
\newcommand{\E}{\mathrm{E}_{\rm gs}(\kappa,H)}
\DeclareMathOperator{\curl}{curl}
 \DeclareMathOperator{\dist}{dist}
\newtheorem{thm}{Theorem}[section]
\newtheorem{prop}[thm]{Proposition}
\newtheorem{lem}[thm]{Lemma}
\theoremstyle{remark}
\newtheorem{rem}[thm]{Remark}
\numberwithin{equation}{section}
\title[GL order parameter]{The Ginzburg-Landau order parameter near the second critical field}
\author{Ayman Kachmar}
\address[A. Kachmar]{Department of Mathematics, Lebanese University, Hadat, Lebanon; School of Arts and Sciences, Lebanese International University, Beirut, Lebanon}
\email{ayman.kashmar@liu.edu.lb}
\date{\today}
\thanks{Mathematics Subject Classification (2010): 35B40, 35P15, 35Q56}
\begin{document}

\begin{abstract}
In Ginzburg-Landau theory of superconductivity, the density  and
location of the superconducting electrons are measured by a
complex-valued wave function,  the order parameter. In this paper,
when  the intensity of the applied magnetic field is close to the
second critical field, and when the order parameter minimizes the
Ginzburg-Landau functional defined over a two dimensional domain,
the  leading order approximation of its $L^2$-norm  in `small'
squares is given as the Ginzburg-Landau parameter tends to infinity.
\end{abstract}

\maketitle 

\section{Introduction}\label{hc2-sec:int}

In this paper, we study the minimizers of the Ginzburg-Landau
functional of superconductivity. In a two bounded and dimensional simply
connected domain $\Omega$ with smooth boundary, the functional is
defined over configurations $(\psi,\Ab)\in H^1(\Omega;\mathbb
C)\times H^1(\Omega;\mathbb R^2)$ as follows,
\begin{align}\label{eq-3D-GLf}
\mathcal E(\psi,\Ab)&=\int_\Omega e_{\kappa,H}(\psi,\Ab)\,dx\nonumber\\
&=\int_\Omega \left(|(\nabla-i\kappa
H\Ab)\psi|^2-\kappa^2|\psi|^2+\frac{\kappa^2}2|\psi|^4+(\kappa
H)^2|\curl\Ab-1)|^2\right)\,\md x\,.
\end{align}
The modulus of the wave  function $\psi$ measures the density of the
superconducting electrons; the curl of the vector field $\Ab$
measures the induced magnetic field; the parameter $H$ measures the
intensity of the external magnetic field and the paramter $\kappa$
is a characteristic of the superconducting material. The functional
in \eqref{eq-3D-GLf} is invariant under gauge transformations, i.e.
if $\chi\in H^1(\Omega;\R)$, then
$$\mathcal E\left(e^{i\chi}\psi,\Ab+\nabla\chi\right)=\mathcal
E(\psi,\Ab)\,.$$
The ground state energy of the functional in \eqref{eq-3D-GLf} is,
\begin{equation}\label{eq-gse}
\E=\inf\{\mathcal E(\psi,\Ab)~:~(\psi,\Ab)\in H^1(\Omega;\mathbb
C)\times H^1(\Omega;\mathbb R^2)\}\,.
\end{equation}
The behavior of the ground state energy $\E$ and of the minimizers
depend strongly on the intensity of the external field \cite{FH-b,
SaSe}. Loosely speaking, there exist three critical values
$H_{C_1}(\kappa)$, $H_{C_2}(\kappa)$ and $H_{C_3}(\kappa)$ such
that, when the parameter $\kappa$ is sufficiently large and
$(\psi_{\kappa,H},\Ab_{\kappa,H})$ is a minimizer of the functional
in \eqref{eq-3D-GLf}, the following is true:
\begin{itemize}
\item If the parameter $H$ satisfies $H<H_{C_1}$, then
$|\psi_{\kappa,H}|>0$ everywhere;
\item if $H_{C_1}<H<H_{C_2}$, then $|\psi_{\kappa,H}|$ has isolated
zeros, called {\it vortices}; these zeros become evenly
distributed in the domain $\Omega$ when $H\gg H_{C_1}$;
\item if $H_{C_2}<H<H_{C_3}$, $|\psi_{\kappa,H}|$ is localized near
the boundary of the domain $\Omega$ (this is the {\it surface}
superconductivity regime);
\item if $H>H_{C_3}$, $|\psi_{\kappa,H}|=0$ everywhere.
\end{itemize}

The two monographs \cite{FH-b} and \cite{SaSe} are completely
devoted to the detailed analysis of the critical fields in the large
$\kappa$ regime, i.e.  $\kappa\to\infty$.  The regime  where
vortices exist (i.e. $H<H_{C_2}$) is analyzed in \cite{SaSe}. The
regime of surface superconductivity above $H_{C_2}$ is the subject
of \cite{FH-b}.

A useful way to distinguish between the various critical fields is
the analysis of the distribution of the energy density in the domain
($e_{\kappa,H}(\psi,\Ab)$ in \eqref{eq-3D-GLf}). This is used in
\cite{Pa02} to distinguish the surface behavior above $H_{C_2}$ and
in \cite{SS02} to study the bulk behavior below $H_{C_2}$.

As a consequence of  the results in \cite{Pa02}, a minimizing order
parameter $\psi$ is localized near the boundary of the domain and
exhibits a boundary layer with a length scale of order
$\kappa^{-1}$. This behavior is valid when $H_{C_2}<H<H_{C_3}$. The
result of \cite{Pa02} is sharpened in  \cite{AlH} and \cite{CN}.

The results of \cite{SS02} are valid when the magnetic field is
comparable with the critical field $H_{C_2}$ and $H<H_{C_2}$. It is
obtained that the energy density is uniformly distributed in the
bulk of the domain thereby suggesting periodicity of  minimizing
order parameters.

In this paper, we investigate the behavior of the minimizers when
$H$ is close to and {\it below} the critical value $H_{C_2}$.
Existing mathematical results \cite{AS, Al, FK-am, Pa02, SS02}
suggest that
$$H_{C_2}(\kappa)=\kappa+o(\kappa)\quad{\rm as~}\kappa\to\infty\,.$$
In \cite{FK-am}, when the parameter $H$ satisfies
$$H=\kappa+o(\kappa)\,,\quad(\kappa\to\infty)\,,$$
it is obtained the following formula for the ground state energy,
\begin{equation}\label{eq-FKam}
\E=E_{\rm surf}|\partial\Omega|\kappa +E_{\rm
Ab}|\Omega|\,[\kappa-H]^2_++o\Big(\max\big(\kappa,[\kappa-H]^2_+\big)\Big)\,,\quad(\kappa\to\infty)\,.\end{equation}
Here $E_{\rm surf}<0$ and $E_{\rm Ab}<0$ are two universal
constants, of which $E_{\rm Ab}$ is related to the celebrated
Abrikosov energy \cite{Ab}; $|\partial\Omega|$ is the arc-length
measure of the boundary and $|\Omega|$ is the Lebesgue (area)
measure of $\Omega$.

The asymptotics in \eqref{eq-FKam} displays the transition from {\it
bulk} to {\it surface} concentration of the energy close to the
critical field $H_{C_2}$. It says little about the concentration of
minimizing order parameters. If $(\psi,\Ab)$ is a minimizer of the
functional in \eqref{eq-3D-GLf}, then it follows from
\eqref{eq-FKam},
\begin{equation}\label{eq-FKam'}
\int_\Omega|\psi|^4\,dx=-\frac{2E_{\rm
surf}}{\kappa}|\partial\Omega|-2E_{\rm
Ab}\,\left[1-\frac{H}{\kappa}\right]_+^2|\Omega|+
o\left(\max\Big(\frac1\kappa,\left[1-\frac{H}{\kappa}\right]^2_+\Big)\right)\,,\quad(\kappa\to\infty)\,.\end{equation}
Clearly, if the parameter $H$ satisfies\,\footnote{The notation
$a(\kappa)\ll b(\kappa)$ means that $a$ and $b$ are positive
functions and $a/b\to 0$ as $\kappa\to\infty$}
$\kappa^{-1/2}\ll1-\frac{H}{\kappa}\ll 1$, then the bulk term in
\eqref{eq-FKam'} is the dominant term. In this case,
\eqref{eq-FKam'} is compatible with the following $L^\infty$-bound
obtained in \cite{FK-am},
\begin{equation}\label{eq-FKam''}
\|\psi\|_{L^\infty(\Omega_{\kappa,\rho})}\leq C
\left[1-\frac{H}{\kappa}\right]^{1/2}\,,\end{equation} where
$\Omega_{\kappa,\rho}=\{x\in \Omega~:~{\rm
dist}(x,\partial\Omega)\geq \kappa^{-1+\rho}\}$, $\rho\in(0,1)$ and
$C$ is a constant. In  this paper we establish the additional
asymptotics of $|\psi|^2$,
\begin{equation}\label{eq-K:main}
\int_\Omega|\psi|^2\,dx= -2{E_{\rm
Ab}}\,\left[1-\frac{H}{\kappa}\right]|\Omega|+
o\left(\left[1-\frac{H}{\kappa}\right]\right)\,,\quad(\kappa\to\infty)\,.\end{equation}
The asymptotics in \eqref{eq-K:main} seems   more relevant to
physicists than the one in \eqref{eq-FKam'}. The density of
superconducting electrons (Cooper pairs) is proportional to
$|\psi|^2$. Consequently, \eqref{eq-K:main} tells us what  the {\it
average} of the density of Cooper pairs in $\Omega$ is. Furthermore,
the right side of \eqref{eq-K:main} displays the intensity of bulk
superconductivity, and  describes how fast superconductivity is
restored in the sample when the magnetic field is gradually
decreased.

The precise statement of the main result of this paper is:
\begin{thm}\label{thm:main}
Suppose that $H$ is a function of $\kappa$, $H< \kappa$  and
$$\kappa^{-1/2}\ll1-\frac{H}{\kappa}\ll 1\,,\quad(\kappa\to\infty)\,.$$
Let $R_0>0$, $R_1>0$, $\rho\in(0,1)$  and $Q_\ell$ be a square of
side length $\ell$ such that,
$$Q_\ell\subset\{x\in \Omega~:~{\rm dist}(x,\partial\Omega)\geq
\kappa^{-1+\rho}\}\,,$$ $\ell$  a function of $\kappa$,
$\displaystyle\frac{\ell^2\kappa H}{2\pi}\in\mathbb N$ and
$R_0\kappa^{-1/2}\leq\ell\leq R_1\kappa^{-1/4}$ for all $\kappa$.

 If
$(\psi,\Ab)$ is a minimizer of the functional in \eqref{eq-3D-GLf},
then,
\begin{equation}\label{eq-psi-K}\frac1{|Q_\ell|}\int_{Q_\ell}|\psi|^2\,dx=-2E_{\rm
Ab}\left[1-\frac{H}{\kappa}\right]+o\left(\left[1-\frac{H}{\kappa}\right]\right)\,,\quad(\kappa\to\infty)\,.\end{equation}
Here $E_{\rm Ab}\in\,[-\frac12,0[$ is a universal constant defined
in \eqref{eq-hc2-E2}.
\end{thm}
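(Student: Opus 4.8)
\medskip
\noindent\textbf{Proof strategy.} Testing the first Ginzburg--Landau equation against $\bar\psi$ and using the Neumann boundary condition gives, for a minimiser,
\begin{equation}\label{eq-sketch-id}
\mathcal E(\psi,\Ab)=-\frac{\kappa^2}{2}\int_\Omega|\psi|^4\,\md x+(\kappa H)^2\int_\Omega|\curl\Ab-1|^2\,\md x,
\end{equation}
which is how \eqref{eq-FKam'} is read off from \eqref{eq-FKam}; the localised analogue --- test against $\bar\psi$ on a square $q$ and bound the resulting boundary current by \eqref{eq-FKam''} and the accompanying elliptic gradient bound for $(\nabla-i\kappa H\Ab)\psi$ on $\Omega_\kappa$ --- reads
\begin{equation}\label{eq-sketch-locid}
\int_q|(\nabla-i\kappa H\Fb)\psi|^2\,\md x=\kappa^2\int_q|\psi|^2\,\md x-\kappa^2\int_q|\psi|^4\,\md x+o\big((\kappa-H)^2|q|\big)
\end{equation}
for bulk squares with $(\kappa-H)|q|^{1/2}\to\infty$; here $\Fb(x)=\tfrac12(-x_2,x_1)$ ($\curl\Fb=1$), and replacing $\Ab$ by $\Fb$ is licit because $(\kappa H)^2\int_\Omega|\curl\Ab-1|^2=o((\kappa-H)^2)$, from \eqref{eq-sketch-id}, \eqref{eq-FKam}, \eqref{eq-FKam'} and Coulomb-gauge elliptic estimates. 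The guiding picture is that, in the bulk, $\psi$ lies almost entirely in the lowest Landau level, so that after rescaling $\psi=(1-\tfrac H\kappa)^{1/2}v$ the profile $v$ approximately minimises the Abrikosov functional; a minimiser of the latter obeys the virial identity $\int|\psi|^4=(1-\tfrac H\kappa)\int|\psi|^2$, and combining this with \eqref{eq-FKam'} produces \eqref{eq-K:main} and, locally, \eqref{eq-psi-K}. Indeed \eqref{eq-sketch-locid}, together with the lowest-Landau-level inequality $\int_q|(\nabla-i\kappa H\Fb)\phi|^2\ge\kappa H\int_q|\phi|^2$ (valid modulo a negligible boundary term, as seen by writing the magnetic Dirichlet form through the twisted Cauchy--Riemann operator), already yields the one-sided bound $\int_q|\psi|^2\ge\tfrac{\kappa}{\kappa-H}\int_q|\psi|^4-o((1-\tfrac H\kappa)|q|)$; the theorem is the quantitative statement that equality essentially holds, i.e.\ that $\psi$ is, on $q$, asymptotically a lowest-Landau-level function with the Abrikosov profile.

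\medskip
\noindent\textbf{Step 1 (a sharp local lower bound with a Landau-level deficit).} Using the flux-quantisation hypothesis so that the Abrikosov reference model defining $E_{\rm Ab}$ in \eqref{eq-hc2-E2} fits exactly on a square $q$ --- equivalently, so that the lowest Landau level $\mathrm L_q\subset L^2(q)$ with magnetic-periodic boundary conditions is well defined and sits at the bottom of the spectrum of the magnetic Laplacian, at energy $\kappa H$, with a gap $2\kappa H$ to the rest --- I will prove that for every bulk square $q$ with $\tfrac{|q|\kappa H}{2\pi}\in\mathbb N$, $1-\tfrac H\kappa\ll|q|^{1/2}\ll1$, and every $\phi\in H^1(q;\C)$ with $\|\phi\|_{L^\infty(q)}\le C(1-\tfrac H\kappa)^{1/2}$,
\begin{multline}\label{eq-sketch-lb}
\mathcal E_0(\phi;q):=\int_q\Big(|(\nabla-i\kappa H\Fb)\phi|^2-\kappa^2|\phi|^2+\tfrac{\kappa^2}{2}|\phi|^4\Big)\,\md x\\
\ge\ E_{\rm Ab}(\kappa-H)^2|q|+c\int_q|(\nabla-i\kappa H\Fb)(\phi-\Pi_q\phi)|^2\,\md x-o\big((\kappa-H)^2|q|\big),
\end{multline}
where $\Pi_q$ is the orthogonal projection onto $\mathrm L_q$ and $c>0$ is universal. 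The first term comes from comparing the free/Dirichlet local functional with the magnetic-periodic model --- their difference being a boundary layer of relative width $(\kappa|q|^{1/2})^{-1}\to0$ --- and from \eqref{eq-hc2-E2} applied to the minimisation over $\mathrm L_q$ of $-\kappa(\kappa-H)\int_q|u|^2+\tfrac{\kappa^2}{2}\int_q|u|^4$, whose infimum equals $E_{\rm Ab}(\kappa-H)^2|q|(1+o(1))$ and whose minimisers satisfy $\int_q|u|^4=(1-\tfrac H\kappa)\int_q|u|^2$ (both by scaling). The deficit term is produced because, with $v=\phi-\Pi_q\phi$, the magnetic Dirichlet energy of $v$ exceeds $3\kappa H\int_q|v|^2$, once the cubic and quartic couplings between $\Pi_q\phi$ and $v$ in $\int_q|\phi|^4$ are absorbed --- either into the deficit or into the error $o((\kappa-H)^2|q|)$ --- via H\"older's and Young's inequalities, the two-dimensional Gagliardo--Nirenberg inequality, the bound $\|\phi\|_\infty\lesssim(1-\tfrac H\kappa)^{1/2}$, and the reproducing-kernel estimate $\|\Pi_q\phi\|_\infty\lesssim\|\phi\|_\infty$.

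\medskip
\noindent\textbf{Step 2 (equidistribution and the passage to $Q_\ell$).} Cover $\Omega_\kappa$ by a grid of flux-quantised squares of side $\ell'$, $1-\tfrac H\kappa\ll\ell'\ll\ell$, with a subordinate partition of unity $(\chi_q)$; by IMS-type localisation, $\mathcal E(\psi,\Ab)\ge\sum_q\mathcal E_0(\chi_q\psi;q)-o((\kappa-H)^2)$, so, applying \eqref{eq-sketch-lb} to each $\chi_q\psi$ and comparing with \eqref{eq-FKam} (the surface term $E_{\rm surf}\kappa|\partial\Omega|$ being $o((\kappa-H)^2)$ here),
\begin{multline*}
\sum_q\big[\mathcal E_0(\chi_q\psi;q)-E_{\rm Ab}(\kappa-H)^2|q|\big]\\
+\ c\sum_q\int_q|(\nabla-i\kappa H\Fb)(\chi_q\psi-\Pi_q(\chi_q\psi))|^2\,\md x\ \le\ o\big((\kappa-H)^2|\Omega|\big).
\end{multline*}
Since each summand of the first sum is $\ge-o((\kappa-H)^2|q|)$ by \eqref{eq-sketch-lb} and the second sum is non-negative, a pigeonhole argument gives, for all but an $o(1)$-fraction of the squares (counted with area), \emph{both} $\mathcal E_0(\chi_q\psi;q)=E_{\rm Ab}(\kappa-H)^2|q|(1+o(1))$ \emph{and} $\int_q|(\nabla-i\kappa H\Fb)(\chi_q\psi-\Pi_q(\chi_q\psi))|^2=o((\kappa-H)^2|q|)$. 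On such a ``good'' square, the second relation gives (up to collar contributions that are $o((\kappa-H)^2|q|)$) $\int_q|(\nabla-i\kappa H\Fb)\psi|^2=\kappa H\int_q|\psi|^2+o((\kappa-H)^2|q|)$; feeding this into \eqref{eq-sketch-locid} yields the local virial $\kappa^2\int_q|\psi|^4=\kappa(\kappa-H)\int_q|\psi|^2+o((\kappa-H)^2|q|)$, whence $\mathcal E_0(\psi;q)=-\tfrac12\kappa(\kappa-H)\int_q|\psi|^2+o((\kappa-H)^2|q|)$, and comparison with the first relation gives
\begin{equation}\label{eq-sketch-good}
\frac1{|q|}\int_q|\psi|^2\,\md x=-2E_{\rm Ab}\Big(1-\frac H\kappa\Big)+o\Big(1-\frac H\kappa\Big),\qquad q\ \text{good}.
\end{equation}
Finally, choose $\ell'$ with $\tfrac{(\ell')^2\kappa H}{2\pi}\in\mathbb N$ in the above window (admissible $\ell'$ exist there, the set $\sqrt{2\pi/(\kappa H)}\,\sqrt{\mathbb N}$ being fine enough), and tile a sub-square $Q_\ell^-\subset Q_\ell$ with $|Q_\ell|-|Q_\ell^-|=o(|Q_\ell|)$ by $\ell'$-squares of the grid; writing $\int_{Q_\ell}|\psi|^2=\sum_{q\subset Q_\ell^-}\int_q|\psi|^2+\int_{Q_\ell\setminus Q_\ell^-}|\psi|^2$, using \eqref{eq-sketch-good} on the good $q$ and, on the bad $q$ and on $Q_\ell\setminus Q_\ell^-$ (total area $o(|Q_\ell|)$), the crude bound $\int_q|\psi|^2\le\|\psi\|_{L^\infty(\Omega_\kappa)}^2|q|\le C(1-\tfrac H\kappa)|q|$ from \eqref{eq-FKam''}, one obtains \eqref{eq-psi-K}.

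\medskip
\noindent\textbf{The main obstacle.} The heart of the proof is the sharp local lower bound \eqref{eq-sketch-lb}: matching the local Ginzburg--Landau functional on a square whose side tends to $0$ (though much more slowly than $1-\tfrac H\kappa$) to the infinite-volume Abrikosov energy \eqref{eq-hc2-E2} --- reconciling free boundary conditions with the magnetic-periodic model and controlling the loss of flux quantisation at the seams of the partition of unity --- \emph{and}, at the same time, retaining the Landau-level deficit, which forces one to estimate the cubic and quartic cross terms by quantities small enough to be absorbed (this is where the a priori $L^\infty$ bounds, in particular $\|\Pi_q\phi\|_\infty\lesssim\|\phi\|_\infty$, and Gagliardo--Nirenberg enter). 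A secondary, purely bookkeeping, difficulty is to keep every localisation error genuinely $o((\kappa-H)^2|q|)=o(\kappa^2(1-\tfrac H\kappa)^2(\ell')^2)$ across the four scales $\kappa^{-1}\ll\kappa^{-1/2}\ll1-\tfrac H\kappa\ll\ell'\ll\ell\ll1$.
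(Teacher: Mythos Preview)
Your Step~1 (the local lower bound with a Landau-level deficit) is essentially what the paper does in Theorem~\ref{thm:Al}, and your identity \eqref{eq-sketch-locid} is the same integration-by-parts the paper uses in Theorem~\ref{thm:loc-en}. The genuine gap is in Step~2: the pigeonhole argument does not localise. Comparing $\sum_q\mathcal E_0(\chi_q\psi;q)$ to the global asymptotic \eqref{eq-FKam} only tells you that the set of ``bad'' $\ell'$-squares has total area $o(|\Omega|)=o(1)$; but $|Q_\ell|=\ell^2\to0$, so nothing prevents \emph{all} the bad squares from lying inside your particular $Q_\ell$. Your closing sentence ``on the bad $q$ \dots\ (total area $o(|Q_\ell|)$)'' is precisely the assertion that does not follow. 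With only the crude bound $\int_q|\psi|^2\le C(1-H/\kappa)|q|$ on bad squares, a bad fraction of order $|Q_\ell|/|\Omega|$ --- which is all you can rule out --- leaves an error of the same size as the main term in \eqref{eq-psi-K}.

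What is missing is a \emph{direct} local upper bound $\mathcal E_0(\psi,\Ab;Q_\ell)\le E_{\rm Ab}(\kappa-H)^2|Q_\ell|(1+o(1))$ on the given square, not deduced from global energy equidistribution. The paper obtains this (Proposition~\ref{prop-ub}) by a comparison argument: replace $\psi$ inside $Q_\ell$ by a near-optimal Dirichlet test profile $u_R(\sqrt{\kappa H}\,x)$, interpolate across a thin collar, keep $\psi$ outside, and use that $(\psi,\Ab)$ is a global minimiser to conclude $\mathcal E_0(\psi,\Ab;Q_\ell)\le b^{-1}m_0(b,R)+\text{(collar errors)}$. Once you have this matching upper bound on $Q_\ell$ itself, your deficit inequality \eqref{eq-sketch-lb} (applied to $\chi_\ell\psi$ on $Q_\ell$) forces the Landau-level remainder to be $o((\kappa-H)^2|Q_\ell|)$ directly, with no pigeonholing; then your virial chain in Step~2 goes through on $Q_\ell$ and yields \eqref{eq-psi-K}. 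In short: your lower-bound machinery is right, but you need the paper's local test-function upper bound to close the argument on a single vanishing square.
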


A key step to prove  Theorem~\ref{thm:main} is the approximation of
the order parameter $\psi$ by a periodic eigenfunction of the Landau
Hamiltonian (see Theorem~\ref{thm:Al}). In \cite{Al}, such an
approximation is given when $\kappa$ and $H$ satisfy,
$$\kappa^{-2/5}\ll1-\frac{H}{\kappa}\ll\frac1{(\ln\kappa)^2}\,,\quad(\kappa\to\infty)\,.$$
This assumption is restrictive compared to that of
Theorem~\ref{thm:main}. Furthermore, the result of
Theorem~\ref{thm:main} goes beyond the result of \cite{Al} as the
formula \eqref{eq-psi-K} is new.

Thanks to the sharp $L^\infty$ bound in \eqref{eq-FKam''}, we have
the following slight improvements of Theorem~\ref{thm:main}. If the
side length of the square $Q_\ell$ satisfies the relaxed condition
$R_0\kappa^{-1/2}\leq \ell\ll1$, then it can be approximated by
squares of side length satisfying the condition of
Theorem~\ref{thm:main} and the asymptotics in \eqref{eq-psi-K}
remains true. The same remark applies if the square $Q_\ell$ is
replaced by a domain that can be approximated by squares whose side
lengths satisfy the condition in Theorem~\ref{thm:main}. In
particular, if squares in Theorem~\ref{thm:main} are replaced by
disks of radii $\ell$ or parallelograms of side lengths comparable
with $\ell$, then the asymptotics in \eqref{eq-psi-K} remains true.

The condition made on the side-length $\ell$, namely $\ell\geq
R_0\kappa^{-1/2}$, is technical. It is needed to get that  the
remainder terms of the estimates in Propositions~\ref{prop-ub} and
\ref{ub:A-A0} are of lower order compared to the expected leading
order term. The method we use to approximate the energy implies that
the asymptotics in Theorem~\ref{thm:main} is true if and only if one
can select $\delta\in(0,1)$ and $\ell\in(0,1)$  such that,
\begin{equation}\label{eq:disc-cond}
\delta^{-1}\ell^2\kappa+\delta\kappa +\ell^{-1}\ll
[\kappa-H]\,.\end{equation}
Clearly, we observe that the condition $\ell\geq R_0\kappa^{-1/2}$
may be relaxed down to $\ell\gg\kappa^{-1}$ if we know that
$1-\frac{H}{\kappa}$ is not close to $\kappa^{-1/2}$. More
precisely, if  $1\ll h(\kappa)\ll \kappa^{1/2}$,
$\delta=\left(h(\kappa)\left(1-\frac{H}{\kappa}\right)\kappa^{-1}\right)^{1/2}$
 and
$$\frac1{h(\kappa)}\ll 1-\frac{H}{\kappa}\ll
1\,,$$ then  the asymptotics in Theorem~\ref{thm:main} remains true for
$\ell=h(\kappa)\kappa^{-1}$.

There might be a physically relevant reason behinds the technical
point that forces $\ell$ to increase up to the order of
$\kappa^{-1/2}$ when $1-\frac{H}{\kappa}$ approaches
$\kappa^{-1/2}$. In \cite{SS02}, when $H$ is below but not
asymptotically close to $H_{C_2}$, it is constructed test
configurations that hint at the expected behavior of minimizers. As
a consequence, it is expected that the minimizing order parameter
will have {\it vortices} and the core size of each vortex is
proportional to $\kappa^{-1}$. As $H$ approaches $H_{C_2}\sim
\kappa$, the core size of the vortex might increase up to
$\kappa^{-1/2}$. When $H$ is increased further up to
$\kappa-\mu\sqrt{\kappa}$ and $\mu\ll 1$, it is expected that all
vortices will merge into a giant vortex and superconductivity
becomes a surface phenomenon, as is revealed from the energy
asymptotics in \eqref{eq-FKam}. However, the rigorous verification
of the aforementioned picture is open.

It is not likely  that the result of Theorem~\ref{thm:main} extends
to squares $Q_\ell$ that live at a distance of order $\kappa^{-1}$
away from the boundary $\partial\Omega$. It is pointed in
\cite{Pa02} that minimizing order parameters  will be of order $1$
in  a boundary layer of  length scale $\kappa^{-1}$.

There is an interesting consequence of Theorem~\ref{thm:main}.   If
we know that $E_{\rm Ab}=-\frac12$,  then \eqref{eq-FKam'} and
\eqref{eq-K:main} together yield,
\begin{equation}\label{eq-u}
\lim_{\kappa\to\infty}\int_{\Omega}\big(A-|u|^2\big)^2\,dx=0\,,\quad
A=-2E_{\rm Ab}\,,\quad
u=\left[1-\frac{H}{\kappa}\right]^{-1/2}\psi\,.\end{equation} Such a
bound is helpful to construct a vortex structure of $u$ \cite{SaSe}.
However, if $-\frac12<E_{\rm Ab}<0$, a convergence such as the one
in \eqref{eq-u} does not hold and the profile of $|\psi|^2$ is not
homogeneous a.e. in $\Omega$. Existing estimates suggest that
$E_{\rm Ab}<-\frac12$ (see \cite{ABN, Sig}) which rule out the
complete homogeneity of $|\psi|^2$. Notice that this is in agreement
with the expected behavior that $\psi$ will have isolated zeros
arranged in a (triangular) lattice, \cite{Ab}.

We conclude  by clarifying some notation that will be used
throughout this paper. If $a(\kappa)$ and $b(\kappa)$ are two
positive functions, we write $a(\kappa)\approx b(\kappa)$ to mean
that there exist positive constants $\kappa_0$, $c_1$ and $c_2$ such
that $c_1b(\kappa)\leq a(\kappa)\leq c_2b(\kappa)$ for all
$\kappa\geq\kappa_0$. The notation $a(\kappa)\sim b(\kappa)$ means
that $a(\kappa)=b(\kappa)\big(1+o(1)\big)$ as $\kappa \to\infty$.
The notation $a(\kappa)\ll b(\kappa)$ means
$a(\kappa)=o(1)\,b(\kappa)$ as $\kappa\to\infty$. Constants in the
remainder of inequalities are all denoted by the letter $C$, whose
value might change from a line to another.

Finally, notice that in the parameter regime of
Theorem~\ref{thm:main},
$$\left[1-\frac{H}{\kappa}\right]=\left[\frac{\kappa}{H}-1\right]\big(1+o(1)\big)\,,\quad(\kappa\to\infty)\,.$$
This remark will be often used throughout the paper.

\section{Useful estimates}\label{hc2-sec-preliminaries}

In this section, we collect {\it a priori} estimates satisfied by
critical points of the functional in \eqref{eq-3D-GLf}. Notice that
critical points of the functional in \eqref{eq-3D-GLf} satisfy the
Ginzburg-Landau equations:
\begin{equation}\label{eq-hc2-GLeq}
\left\{
\begin{array}{l}
-(\nabla-i\kappa H\Ab)^2\psi=\kappa^2(1-|\psi|^2)\psi\,,\\
-\nabla^\bot\curl\Ab=(\kappa H)^{-1}{\rm
Im}(\overline\psi\,(\nabla-i\kappa
H\Ab)\psi)\,,\quad{\rm in}~\Omega\,,\\
\nu\cdot(\nabla-i\kappa H\Ab)\psi=0\,,\quad \curl\Ab=1\,,\quad{\rm
  on}~\partial\Omega\,.
\end{array}\right.
\end{equation}
Here $\nu$ is the unit inward normal vector of $\partial\Omega$. The
set of estimates in Lemma~\ref{lem-hc2-FoHe} appeared first in
\cite{LuPa99} (for a more particular regime) and were then  proved
for a wider regime in \cite{FoHe06}.

\begin{lem}\label{lem-hc2-FoHe}
There exist positive constants $\kappa_0$ and $C$ such that, if
$\kappa\geq\kappa_0$, $H\geq\frac\kappa2$ and  $(\psi,\Ab)$ is a
solution of (\ref{eq-hc2-GLeq}), then,
\begin{align}\label{eq-hc2-FoHe1}
&\|\curl\Ab-1\|_{C^1(\Omega)}+\kappa^{-1}\|\curl\Ab-1\|_{C^2(\Omega)}\leq
C\kappa^{-1}\,,\\
&\|(\nabla-i\kappa H\Ab)\psi\|_{L^\infty(\Omega)}\leq
C\kappa\,.\label{eq-hc2-FoHe2}
\end{align}
\end{lem}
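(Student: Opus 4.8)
The plan is to combine a maximum-principle bound on the order parameter with an elliptic bootstrap for the induced field, along the lines of \cite{LuPa99, FoHe06}. First I would multiply the first equation of \eqref{eq-hc2-GLeq} by $\overline\psi$ and take real parts to obtain
\[
-\tfrac12\Delta|\psi|^2+|(\nabla-i\kappa H\Ab)\psi|^2=\kappa^2(1-|\psi|^2)|\psi|^2\qquad\text{in }\Omega,
\]
while the magnetic Neumann condition $\nu\cdot(\nabla-i\kappa H\Ab)\psi=0$ yields $\partial_\nu|\psi|^2=0$ on $\partial\Omega$; evaluating at a point where $|\psi|^2$ attains its maximum (in the interior, or on $\partial\Omega$ via Hopf's lemma) would show that this maximum cannot exceed $1$, so $\|\psi\|_{L^\infty(\Omega)}\le 1$. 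Integrating the same identity over $\Omega$, where the boundary term drops out, then yields the $L^2$ version of the gradient bound,
\[
\int_\Omega|(\nabla-i\kappa H\Ab)\psi|^2\,dx=\kappa^2\int_\Omega(1-|\psi|^2)|\psi|^2\,dx\le\kappa^2|\Omega|.
\]

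Next I would set $B=\curl\Ab-1$, which lies in $H^1_0(\Omega)$ since $\curl\Ab=1$ on $\partial\Omega$. The second equation of \eqref{eq-hc2-GLeq} reads $\nabla^\bot B=-(\kappa H)^{-1}\,\mathrm{Im}\big(\overline\psi\,(\nabla-i\kappa H\Ab)\psi\big)$, so the Cauchy-Schwarz inequality, the previous bound and $H\ge\kappa/2$ give $\|\nabla B\|_{L^2(\Omega)}\le(\kappa H)^{-1}\|\psi\|_{L^\infty}\|(\nabla-i\kappa H\Ab)\psi\|_{L^2}\le C/H$, hence $\|B\|_{L^2(\Omega)}\le C/H$ by Poincar\'e's inequality and $(\kappa H)^2\|B\|_{L^2}^2\le C\kappa^2$. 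Feeding this into the integrated Ginzburg-Landau identity of the first paragraph gives
\[
\mathcal E(\psi,\Ab)=-\tfrac{\kappa^2}{2}\int_\Omega|\psi|^4\,dx+(\kappa H)^2\int_\Omega B^2\,dx\le C\kappa^2,
\]
the energy bound in \eqref{eq-hc2-FoHe2}.

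For the pointwise estimates, I would first note that for $H$ above a fixed multiple of $\kappa$ one has $\psi\equiv 0$ (a standard spectral fact, the bottom of the Neumann spectrum of $-(\nabla-i\kappa H\Ab)^2$ then exceeding $\kappa^2$), hence $B\equiv 0$ and there is nothing to prove; so assume $H\le C_0\kappa$ and set $\ell=(\kappa H)^{-1/2}\approx\kappa^{-1}$, the natural length scale. Fixing $x_0\in\Omega$, rescale $\widetilde\psi(y)=\psi(x_0+\ell y)$ on the unit ball --- a half-ball, after flattening $\partial\Omega$ near $x_0$, when $x_0$ is close to the boundary --- and choose a local gauge in which the rescaled potential $\widetilde\Ab$ is divergence-free with $\widetilde\Ab(0)=0$; since $\curl\widetilde\Ab(y)=1+B(x_0+\ell y)$ is bounded, so is $\widetilde\Ab$ on the unit ball. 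The rescaled equation $-(\nabla-i\widetilde\Ab)^2\widetilde\psi=(\kappa/H)(1-|\widetilde\psi|^2)\widetilde\psi$ has bounded coefficients and right-hand side and satisfies $\|\widetilde\psi\|_{L^\infty}\le 1$, so interior (respectively up-to-the-boundary Neumann) $W^{2,p}$ estimates followed by Sobolev embedding give $\|(\nabla-i\widetilde\Ab)\widetilde\psi\|_{L^\infty}\le C$ on the concentric ball of half the radius; scaling back, $\|(\nabla-i\kappa H\Ab)\psi\|_{L^\infty(\Omega)}\le C\ell^{-1}\le C\kappa$, the first bound in \eqref{eq-hc2-FoHe2}. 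With this in hand, $\|\nabla B\|_{L^\infty}=(\kappa H)^{-1}\|\mathrm{Im}(\overline\psi\,(\nabla-i\kappa H\Ab)\psi)\|_{L^\infty}\le C/H\le C\kappa^{-1}$, and since $B$ vanishes on $\partial\Omega$ also $\|B\|_{L^\infty}\le C\kappa^{-1}$; this is the $C^1$-part of \eqref{eq-hc2-FoHe1}. Finally, taking the curl of the second Ginzburg-Landau equation expresses $\Delta B$ as $(\kappa H)^{-1}\curl\big(\mathrm{Im}(\overline\psi\,(\nabla-i\kappa H\Ab)\psi)\big)$, which a direct computation bounds in $L^\infty$ by $C(\kappa H)^{-1}\|(\nabla-i\kappa H\Ab)\psi\|_{L^\infty}^2+C\le C$; a further, carefully quantified elliptic bootstrap for this equation (using the covariant bound $\|(\nabla-i\kappa H\Ab)^2\psi\|_{L^\infty}\le C\kappa^2$ coming from the same rescaling) then gives $\|B\|_{C^2(\Omega)}\le C$, i.e.\ the remaining term in \eqref{eq-hc2-FoHe1}.

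The real work lies in the pointwise estimates: the elliptic bootstrap must be carried out uniformly in the large parameter $\kappa H$, which is precisely what forces the rescaling to the semiclassical scale $(\kappa H)^{-1/2}$ and demands a careful, gauge-covariant bookkeeping --- the quantities $(\nabla-i\kappa H\Ab)\psi$ and $B$ are small, or merely bounded, at different orders in $\kappa$, whereas the non-gauge-invariant ingredients (such as $\nabla\psi$ alone, or $\kappa H\Ab$) are not controlled at those scales and must never be estimated separately --- while also handling the boundary (flattening $\partial\Omega$, Neumann estimates for $\psi$ versus Dirichlet estimates for $B$). The endpoint $C^2$-bound on $B$, where one must track the Schauder (or $W^{2,p}$) constants against $\kappa H$, is the most delicate point. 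The mild circularity between the field estimate and the pointwise gradient estimate is harmless, being broken by the coarse $H^1$-bound on $B$ obtained first.
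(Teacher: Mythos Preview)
The paper does not prove this lemma at all: immediately before the statement it says that these estimates ``appeared first in \cite{LuPa99} (for a more particular regime) and then proved for a wider regime in \cite{FoHe06}'', and no argument is given. Your sketch is precisely the strategy of those references --- maximum principle for $|\psi|\le 1$, the integrated identity for the $L^2$ covariant gradient and the energy, an $H^1$ bound on $B=\curl\Ab-1$ from the current equation, and then the semiclassical blow-up at scale $(\kappa H)^{-1/2}$ to upgrade everything to $L^\infty$ --- so in that sense your approach coincides with the (cited) proof.

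One point deserves more care than you give it. You assert that ``$\curl\widetilde\Ab=1+B(x_0+\ell y)$ is bounded'' before you have any pointwise control on $B$; the $H^1$ bound alone does not give $L^\infty$ in two dimensions, so ``breaking the circularity by the coarse $H^1$-bound'' is not quite enough as stated. What actually happens in \cite{FoHe06} is an intermediate step: from $B\in H^1_0$ one gets $B\in L^p$ for every finite $p$ with norm $O(1/H)$, feeds this into the div--curl system for $\Ab$ in Coulomb gauge to get $\Ab\in W^{1,p}$ (hence H\"older continuous) with quantitative bounds, and only then performs the rescaled elliptic estimate for $\psi$; a second pass then yields the $C^1$ and $C^2$ control on $B$. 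Your outline is correct once this extra loop of the bootstrap is inserted.
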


The sharp $L^\infty$ bound in the next  theorem is established in
\cite{FK-am}.  It has been conjectured in a weaker form in
\cite{AS}. The bound of Theorem~\ref{thm:psi;1-b} plays a key-role
in the proof of Theorem~\ref{thm:main}.

\begin{thm}\label{thm:psi;1-b}
\label{thm:Linfty-bulk} Let $\rho\in (0,1)$. Suppose that $\kappa$
and $H$ satisfy,
$$\kappa^{-1/2}\ll1-\frac{H}{\kappa}\ll
1\,,\quad(\kappa\to\infty)\,.$$ There exist  positive constants $C>0$ and
$\kappa_0$ such that, if $\kappa\geq\kappa_0$ and $(\psi, {\bf A})$
is a solution of \eqref{eq-hc2-GLeq}, then,
\begin{align}\label{eq-psi<1-b}
\| \psi \|_{L^{\infty}(\Omega_{\kappa,\rho})} \leq C \left[1-\frac{H}{\kappa}\right]^{1/2}\,,
\end{align}
where
\begin{align}
\Omega_{\kappa,\rho} := \{ x \in \Omega\,|\, \dist(x,\partial \Omega)
\geq \kappa^{-1+\rho}\}\,.
\end{align}
\end{thm}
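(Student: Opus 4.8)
The plan is to derive the $L^\infty$ bound \eqref{eq-psi<1-b} from the Ginzburg-Landau equations \eqref{eq-hc2-GLeq} together with the a priori estimates of Lemma~\ref{lem-hc2-FoHe}, using a local elliptic-regularity/blow-up argument. First I would rescale the problem: pick a point $x_0\in\Omega_\kappa$ where $|\psi|$ is (nearly) maximal, set $\delta=[1-\tfrac{H}{\kappa}]^{1/2}$ and introduce the new variable $y=\kappa(x-x_0)$ together with a gauge change removing the linear part of $\kappa H\Ab$ at $x_0$. In these variables the first GL equation becomes $-(\nabla_y-iA_\kappa)^2\tilde\psi=(1-|\tilde\psi|^2)\tilde\psi$ on a ball of radius $\sim\kappa^{3/4}$ (coming from $\dist(x_0,\partial\Omega)\geq\kappa^{-1/4}$), where by \eqref{eq-hc2-FoHe1} the rescaled magnetic potential $A_\kappa$ has curl close to $H/\kappa=1-\delta^2+\tfrac{1}{\kappa}\cdot(\cdots)$, i.e. essentially a unit field plus lower order, and by \eqref{eq-hc2-FoHe2} we have $|(\nabla_y-iA_\kappa)\tilde\psi|\leq C$ and $\|\tilde\psi\|_\infty\leq C$ uniformly.

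The core of the argument is an energy/Agmon-type estimate showing that the rescaled local energy is small, of order $\delta^2$ per unit area, so that after a second blow-up the limiting profile solves a model equation forcing the sup to be $O(\delta)$. Concretely, I would proceed by contradiction: if \eqref{eq-psi<1-b} fails there is a sequence $\kappa_n\to\infty$ and points $x_n\in\Omega_{\kappa_n}$ with $|\psi_n(x_n)|/\delta_n\to\infty$. Rescale around $x_n$ at the natural length scale $1/\kappa_n$, extract (using the uniform $C^{0,\alpha}$ bounds from interior elliptic estimates applied to the first GL equation, with the right-hand side controlled by $\kappa^2\cdot(\text{bounded})$ in the original scale, hence $O(1)$ after rescaling) a locally uniformly convergent subsequence $\tilde\psi_n\to\psi_\infty$ on all of $\R^2$. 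The limit solves $-(\nabla-iA_\infty)^2\psi_\infty=(1-|\psi_\infty|^2)\psi_\infty$ with $\curl A_\infty=b_\infty:=\lim H_n/\kappa_n\in[1/2,1]$ (after passing to a subsequence), $|\psi_\infty|$ bounded, and $\nabla|\psi_\infty|$ bounded. One then invokes (or re-proves via the diamagnetic inequality and the spectral gap $\Spec(-(\nabla-iA_\infty)^2)\subset[b_\infty,\infty)$ when $b_\infty\geq1/2>0$) that any such bounded entire solution with $b_\infty>0$ has $|\psi_\infty|^2\leq 1-b_\infty$ pointwise — this is the key nonexistence/Liouville statement: multiplying by $\bar\psi_\infty$, integrating against a cutoff, and using $\int|(\nabla-iA_\infty)\psi_\infty|^2\geq b_\infty\int|\psi_\infty|^2$ forces $\int(|\psi_\infty|^2-(1-b_\infty))|\psi_\infty|^2\leq0$, and a refinement with translated cutoffs upgrades this to the pointwise bound. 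Since $1-b_\infty=\lim\delta_n^2$ and $\delta_n\to0$, this gives $\psi_\infty\equiv0$, while $|\psi_n(x_n)|\geq C\delta_n\to0$ only tells us $\tilde\psi_n(0)\to0$; the contradiction instead comes from the hypothesis $|\psi_n(x_n)|/\delta_n\to\infty$, which after the correct choice of rescaling scale (dividing $\psi$ by $\delta_n$ as well) produces a nontrivial bounded entire solution of the model equation that is ruled out.

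I would therefore run the blow-up at the amplitude scale: set $u_n=\psi_n/\delta_n$ and $z=\kappa_n(x-x_n)$ so that $u_n(0)=|\psi_n(x_n)|/\delta_n\to\infty$ — but $\|u_n\|_\infty$ need not be bounded, so more carefully one normalizes by $M_n:=\|\psi_n\|_{L^\infty(\Omega_{\kappa_n})}$ (assumed $\gg\delta_n$), rescales $v_n=\psi_n/M_n$ at scale $1/\kappa_n$ around a near-maximizer, gets $\|v_n\|_\infty\leq 1+o(1)$, $|v_n(0)|\to1$, and $v_n$ solves $-(\nabla-iA_n)^2 v_n=\kappa_n^2 M_n^{-2}\cdot M_n^{2}(1-M_n^2|v_n|^2)v_n\cdot M_n^{-2}$ — tracking powers, the equation for $v_n$ at scale $1/(\kappa_n M_n)$... here I'd instead keep scale $1/\kappa_n$ so the magnetic term is order one, and the nonlinearity becomes $(1-M_n^2|v_n|^2)v_n$; since $M_n\to0$ the limit $v_\infty$ solves $-(\nabla-iA_\infty)^2 v_\infty=v_\infty$ with $|v_\infty|\leq1$, $|v_\infty(0)|=1$, contradicting the spectral gap $-(\nabla-iA_\infty)^2\geq b_\infty\geq1/2$ which forces $v_\infty\equiv0$ unless the eigenvalue $1$ lies in the spectrum — and $1>b_\infty$ is possible, so the contradiction is subtler and requires the full strength of the reduced energy estimate from \cite{FK-am} bounding $\int_{\text{local}}|\psi|^2\leq C\delta^2|Q|$, which caps $M_n$. \textbf{The main obstacle} is exactly this: obtaining the correct local energy bound $e_{\kappa,H}\lesssim\kappa^2\delta^2$ on bulk balls (improving \eqref{eq-hc2-FoHe2}) before the blow-up, since the naive spectral-gap Liouville argument only yields $|\psi_\infty|^2\leq1-b_\infty$ in the limit and does not by itself control the normalizing constant $M_n$; one must feed in the energy asymptotics \eqref{eq-FKam} (equivalently the refined lower bound for $-(\nabla-i\kappa H\Ab)^2$ with Neumann condition on bulk squares, whose bottom eigenvalue is $\sim\kappa H\gtrsim\kappa^2$) to see that any region where $|\psi|\gg\delta$ carries too much positive kinetic energy to be energetically admissible. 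Once that energy localization is in hand, the blow-up limit is forced to vanish and a standard iteration converts the integral smallness into the pointwise bound \eqref{eq-psi<1-b}.
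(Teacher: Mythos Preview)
The paper does not prove this theorem; it is quoted from \cite{FK-am} (see the sentence immediately preceding the statement). So there is no in-paper proof to compare against, and your task was really to reconstruct the argument of \cite{FK-am}.

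Your blow-up/compactness strategy has the right ingredients but, as you yourself discover midway through, it does not close. Passing to a limit along a sequence $\kappa_n\to\infty$ loses the quantitative scale $\delta_n=(1-H_n/\kappa_n)^{1/2}\to 0$: the limiting entire solution is identically zero, which gives no contradiction with $M_n/\delta_n\to\infty$. Your alternative normalization $v_n=\psi_n/M_n$ leads to the linear limit $-(\nabla-iA_\infty)^2v_\infty=v_\infty$ with $|v_\infty(0)|=1$ and $\curl A_\infty=1$; but $1$ is exactly the lowest Landau level, so such $v_\infty$ abound and no contradiction arises. You correctly flag this as ``the main obstacle'' and then appeal to a local energy bound from \cite{FK-am} --- which is essentially circular, since that bound is established in the same paper and by closely related arguments.

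The actual proof in \cite{FK-am} does \emph{not} argue by contradiction via compactness. It works at each fixed large $\kappa$: after rescaling at scale $(\kappa H)^{-1/2}$ around a bulk point $x_0$ and a suitable gauge change (using \eqref{eq-hc2-FoHe1}), the order parameter solves on a ball of radius $\sim\kappa^{3/4}$ an equation of the form $-b(\nabla-i\widetilde A)^2u=(1-|u|^2)u$ with $\curl\widetilde A=1+O(\kappa^{-1})$ and $b=H/\kappa$. One then invokes the \emph{quantitative} pointwise estimate of Fournais--Helffer \cite{FH-jems} (the same result the paper uses at \eqref{eq:u<1-b}): bounded solutions of this model equation satisfy $|u|\leq C_{\max}\sqrt{1-b}$, with a universal constant. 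The proof of that estimate is based on local elliptic regularity and a careful analysis of the equation for $|u|^2$; it is not a soft spectral-gap argument, and your sketch ``a refinement with translated cutoffs upgrades this to the pointwise bound'' substantially understates the work involved. Because the Fournais--Helffer bound is quantitative and its proof is local, it applies on the large rescaled ball with uniform constants, giving directly $|\psi(x_0)|\leq C(1-H/\kappa)^{1/2}$ without ever taking $\kappa\to\infty$. That is the missing idea: apply the Liouville-type bound \emph{at fixed $\kappa$ with its explicit $b$-dependence}, rather than in a limit where the dependence is washed out.
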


\section{The limiting problem}\label{hc2-sec-lbp}

\subsection{Reduced Ginzburg-Landau functional}

Given a constant $b\geq 0$ and an open set $\mathcal D\subset \R^2$,
we define the following Ginzburg-Landau energy,
\begin{equation}\label{eq-LF-2D}
G_{b,\mathcal D}(u)=\int_{\mathcal D}\left(b|(\nabla-i\Ab_0)u|^2
-|u|^2+\frac1{2}|u|^4\right)\,dx\,.
\end{equation}
Here $\Ab_0$ is the canonical magnetic potential with unit constant
magnetic field,
\begin{equation}\label{eq-hc2-mpA0}
\Ab_0(x_1,x_2)=\frac12(-x_2,x_1)\,,\quad\Big(x=(x_1,x_2)\in
\R^2\Big)\,.\end{equation}
We will consider the functional $G_{b,\mathcal D}$ first with
Dirichlet and later with (magnetic) periodic boundary conditions. It
will be clear from the context what is meant.

Consider the functional with Dirichlet boundary conditions and for
$b>0$. If the domain $\mathcal D$ is bounded, completing  the square
in the expression of $G_{b,\mathcal D}$ shows that $G_{b,\mathcal
D}$ is bounded from below. Thus, starting from a minimizing
sequence, it is easy to check that $G_{b,\mathcal D}$ has a
minimizer. A standard application of the maximum principle  shows
that, if $u$ is any minimizer of $G_{b,\mathcal D}$, then
\begin{align}
\label{eq:MaxPr}
|u|\leq 1,\qquad \text{  in } {\mathcal D},
\end{align}
see e.g. \cite{SaSe}.

Given $R>0$, we denote by  $K_R=(-R/2,R/2)\times(-R/2,R/2)$ a square
of side length $R$. Let,
\begin{equation}\label{eq-m0(b,R)}
m_0(b,R)=\inf_{u\in H^1_0(K_R;\C)}
G_{b,K_R}(u)\,.
\end{equation}

The following remark will be useful. If $u_R$ is a minimizer of
\eqref{eq-m0(b,R)}, then $u_R$ satisfies the Ginzburg-Landau
equation,
$$-(\nabla-i\Ab_0)^2u_R=b^{-1}(1-|u_R|^2)u_R\,,\quad {\rm in ~}Q_R\,.$$
Recall that $u_R\in H^1_0(Q_R)$. We  extend $u_R$ by magnetic
periodicity to all $\R^2$, i.e. to a function in the space $E_R$ in
\eqref{eq-hc2-space1} below. That way, $u_R$  satisfies the equation
$$-(\nabla-i\Ab_0)^2u_R=b^{-1}(1-|u_R|^2)u_R\,,$$
in all $\R^2$. We can apply  Theorem~3.1 in \cite{FH-jems} to get
that, \begin{equation}\label{eq:u<1-b} \forall ~b\in
[1-b_0,1[\,,\quad |u_R|\leq C_{\max}\sqrt{1-b}\,,\end{equation}
where $b_0$ and $C_{\max}$ are universal constant.

\subsection{Periodic minimizers}\label{sec:periodic}

We introduce the following space,
\begin{multline}\label{eq-hc2-space1}
E_{R}=\bigg{\{}u\in H^1_{\rm loc}(\R^2;\C)~:~u(x_1+R,x_2)=e^{i
Rx_2/2 }u(x_1,x_2),\\
u(x_1,x_2+R )=e^{-iRx_1/2
}u(x_1,x_2)\,,~\Big((x_1,x_2)\in\R^2\Big)\bigg{\}}\,.
\end{multline}

Notice that the periodicity conditions in \eqref{eq-hc2-space1} are
constructed in such a manner that all physically relevant quantities
are periodic (i.e. density, energy and super-current). More
precisely, for any function $u\in E_{R}$, the functions $|u|$,
$|\nabla_{\Ab_0}u|$ and the vector field $\overline u
\nabla_{\Ab_0}u$ are periodic with respect to the lattice generated
by $K_R$.

Recall the functional $G_{b,\mathcal D}$ in \eqref{eq-LF-2D} above.
We introduce the ground state energy,
\begin{equation}\label{eq-mp(b,R)}
m_{\rm p}(b,R)=\inf_{u\in E_R}
G_{b,K_R}(u)\,.
\end{equation}

The next proposition exhibits a relation between the ground state
energies  $m_0(b,R)$ and $m_{\rm p}(b,R)$, namely that $m_{\rm
p}(b,R)$ is a valid approximation of $m_0(b,R)$ when $[1-b]R\ll1$.
It is proved in \cite{FK-cpde}.

\begin{prop}\label{prop-m0=mp}
Let  $m_0(b,R)$ and $m_{\rm p}(b,R)$ be as introduced in
\eqref{eq-m0(b,R)} and \eqref{eq-mp(b,R)} respectively. For all
$b>0$ and $R>0$,  we have,
$$m_0(b,R)\geq m_{\rm p}(b,R)\,.$$
Furthermore, there exist universal constants $\epsilon_0\in(0,1)$
and $C>0$ such that, if $b\geq 1-\epsilon_0$ and $R\geq 2$, then,
\begin{align}\label{eq:m0mP}
m_0(b,R)\leq m_{\rm p}(b,R)+C[1-b]_+R\,.
\end{align}
\end{prop}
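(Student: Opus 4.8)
The plan is to prove the two inequalities in Proposition~\ref{prop-m0=mp} separately, the first one being essentially trivial and the second requiring a cut-off construction.

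\textbf{The easy inequality.} For $m_0(b,R)\geq m_{\rm p}(b,R)$, I would simply observe that any $u\in H^1_0(K_R;\C)$ can be extended by magnetic periodicity (using the boundary conditions defining $E_R$ in \eqref{eq-hc2-space1}) to an element of $E_R$; since $|u|$, $|\nabla_{\Ab_0}u|$ and $|u|^4$ are then $K_R$-periodic, the energy $G_{b,K_R}$ of the extension equals $G_{b,K_R}(u)$ (the integral over one period). Hence the infimum over the larger class $E_R$ is no larger, which is exactly $m_0(b,R)\geq m_{\rm p}(b,R)$. (One must check that the Dirichlet condition $u=0$ on $\partial K_R$ makes the periodic extension lie in $H^1_{\rm loc}$, which it does.)

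\textbf{The hard inequality.} For \eqref{eq:m0mP}, let $u\in E_R$ be a minimizer (or near-minimizer) of $m_{\rm p}(b,R)$. I want to turn it into a competitor for $m_0(b,R)$, i.e. a function vanishing on $\partial K_R$. The natural idea is to multiply by a cut-off function $\chi_R$ equal to $1$ on a slightly smaller square $K_{R-1}$ and $0$ near $\partial K_R$, with $|\nabla\chi_R|\leq C$ supported in a boundary layer of width $O(1)$ and area $O(R)$. Then $\chi_R u\in H^1_0(K_R;\C)$, so $m_0(b,R)\leq G_{b,K_R}(\chi_R u)$. I would expand $G_{b,K_R}(\chi_R u)$ and compare it to $G_{b,K_R}(u)=m_{\rm p}(b,R)$: the diamagnetic term gives $b\int |\nabla_{\Ab_0}(\chi_R u)|^2 = b\int \chi_R^2|\nabla_{\Ab_0}u|^2 + b\int |\nabla\chi_R|^2|u|^2 + (\text{cross term})$, and the other terms change by $\int(\chi_R^2-1)(-|u|^2) + \tfrac12\int(\chi_R^4-1)|u|^4$, all of which are errors supported in the boundary layer. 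The pointwise bound $|u|\leq C_{\max}\sqrt{1-b}$ from \eqref{eq:u<1-b} is what makes these errors of size $O([1-b]_+R)$: each one is bounded by (constant)$\times$(area of layer)$\times\|u\|_\infty^2 \leq C[1-b]_+R$. The cross term $2b\,\mathrm{Re}\int \chi_R u\,\overline{\nabla\chi_R\cdot(\nabla_{\Ab_0}u)}$ is handled by Cauchy--Schwarz, again using $\|u\|_\infty^2\leq C(1-b)$ together with $\int_{\text{layer}}|\nabla_{\Ab_0}u|^2$, which by the completed-square lower bound on $G_{b,K_R}$ (and $|u|\leq 1$) is itself $O(R/b)\cdot\|u\|_\infty^2\cdot(\dots)$; more cleanly, one splits $b\int|\nabla\chi_R||u||\nabla_{\Ab_0}u| \leq \tfrac{b}{2}\int\chi_R^2\cdot(\text{absorb})\dots$ or simply bounds it by $\eta b\int_{\text{layer}}|\nabla_{\Ab_0}u|^2 + C_\eta[1-b]_+R$ and notes the first term is an acceptable error since $\int_{\text{layer}}|\nabla_{\Ab_0}u|^2\leq C(1-b)R$ (this last bound coming from testing the equation for $u$, or from $G_{b,K_R}(u)\leq 0$ plus $|u|\leq C_{\max}\sqrt{1-b}$).

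\textbf{Main obstacle.} The routine bookkeeping of the boundary-layer error terms is straightforward once one has the $L^\infty$ bound \eqref{eq:u<1-b}; the genuinely delicate point is controlling $\int_{\text{layer}}|\nabla_{\Ab_0}u|^2$ by $C(1-b)R$, i.e. showing the gradient energy of the periodic minimizer is proportional to its sup-norm squared times the area. I expect to get this by multiplying the Ginzburg--Landau equation $-(\nabla-i\Ab_0)^2u = b^{-1}(1-|u|^2)u\cdot b = (1-|u|^2)u$ (rescaled appropriately) by $\overline u$ and integrating over a period — periodicity kills the boundary term — which yields $b\int_{K_R}|\nabla_{\Ab_0}u|^2 = \int_{K_R}(|u|^2-|u|^4)\leq \int_{K_R}|u|^2\leq C(1-b)R^2$; then since $R\geq 2$ one passes to the $O(R)$-area layer with a covering/periodicity argument, or one simply notes $C(1-b)R^2$ already dominates $[1-b]_+R$ for $R\geq 1$, making even the crude global bound sufficient. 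Assembling these gives $m_0(b,R)\leq m_{\rm p}(b,R)+C[1-b]_+R$ for $b\geq 1-\epsilon_0$ and $R\geq 2$, as claimed.
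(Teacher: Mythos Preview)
The paper does not prove Proposition~\ref{prop-m0=mp}; it is simply quoted from \cite{FK-cpde}. Your overall strategy---the inclusion $H^1_0(K_R)\hookrightarrow E_R$ for the first inequality, and truncation of a periodic minimizer by a width-$O(1)$ cut-off for the second, with the boundary-layer errors controlled through the pointwise bound $|u|\leq C\sqrt{1-b}$---is the standard one and matches the argument in \cite{FK-cpde}. One remark: \eqref{eq:u<1-b} is stated in the paper only for the (periodically extended) Dirichlet minimizer; you apply it to the periodic minimizer. This is legitimate, but only because the cited result (Theorem~3.1 in \cite{FH-jems}) applies to any $E_R$-solution of the Ginzburg--Landau equation, which you should say explicitly.

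There is, however, a real gap in your treatment of the kinetic cross term. Both fixes you propose fail. The ``covering/periodicity argument'' cannot turn the full-square bound $b\int_{K_R}|(\nabla-i\Ab_0)u|^2\leq C(1-b)R^2$ into a layer bound $\int_{\mathrm{layer}}|(\nabla-i\Ab_0)u|^2\leq C(1-b)R$, since $K_R$ is a single period cell and there is no finer periodicity to exploit. And the ``crude global bound'' produces an error of order $(1-b)R^2$, not $(1-b)R$; this is \emph{not} sufficient for \eqref{eq:m0mP}, and one can check it would also be too large for the application in Proposition~\ref{prop-ub} under the parameter choices of Remark~\ref{rem:ub}. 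The clean repair is to avoid the cross term altogether via the IMS-type identity: integrating by parts (the boundary terms vanish since $\chi_R=0$ on $\partial K_R$),
\[
\int_{K_R}\big|(\nabla-i\Ab_0)(\chi_R u)\big|^2
=\int_{K_R}\chi_R^2\big|(\nabla-i\Ab_0)u\big|^2
-\int_{K_R}\chi_R(\Delta\chi_R)\,|u|^2\,,
\]
so the kinetic difference is at most $b\int_{\mathrm{layer}}|\chi_R\Delta\chi_R|\,|u|^2\leq C(1-b)R$, using only $|u|\leq C\sqrt{1-b}$ and a cut-off with bounded second derivatives. Alternatively, one can invoke an $L^\infty$ bound on $(\nabla-i\Ab_0)u$ for periodic Ginzburg--Landau solutions (obtained by elliptic regularity, in the spirit of Lemma~\ref{lem-hc2-FoHe}) to get the layer gradient bound directly. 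With either correction your argument goes through.
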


\subsection{The periodic Schr\"odinger operator with constant
  magnetic field.}

In this section, we assume the quantization condition that
$|K_{R}|/(2\pi)$ is an integer, i.e. there exists $N\in\mathbb N$
such that,
\begin{equation}\label{eq-hc2-quantization}
R^2=2\pi N\,.\end{equation} Recall the  magnetic potential $\Ab_0$
introduced in \eqref{eq-hc2-mpA0} above. Consider the operator,
\begin{equation}\label{eq-hc2-poperator}
P_{R}=-(\nabla-i\Ab_0)^2\quad{\rm in}~L^2(K_{R})\,,
\end{equation}
with form domain the space $E_{R}$ introduced in
(\ref{eq-hc2-space1}). More precisely, $P_{R}$ is the self-adjoint
realization associated with the closed quadratic form
\begin{equation}\label{eq-hc2-poperatorQF}
E_{R}\ni f\mapsto Q_{R}(f)=\|(\nabla-i\Ab_0)f\|_{L^2(K_{R})}^2\,.
\end{equation}

The operator $P_{R}$ has compact resolvent. Denote by
$\{\mu_j(P_{R})\}_{j\geq1}$ the increasing sequence of its distinct
eigenvalues (i.e. without counting multiplicity).

The following proposition may be classical in the spectral theory
of Schr\"odinger operators, but we refer to \cite{AS} or \cite{Al}
for a simple proof.

\begin{prop}\label{prop-hc2-poperator}
Assuming $R$  is such that $|K_{R}|\in2\pi\mathbb N$,  the operator
$P_{R}$ enjoys the following spectral properties:
\begin{enumerate}
\item $\mu_1(P_{R})=1$ and $\mu_2(P_{R})\geq 3$\,.
\item The space $L_{R}={\rm Ker}(P_{R}-1)$ is finite
dimensional and ${\rm dim}\,L_{R}=|K_{R}|/(2\pi)$\,.
\end{enumerate}
Consequently, denoting by $\Pi_1$ the orthogonal projection on the
space $L_{R}$ (in $L^2(K_{R})$) and by $\Pi_2={\rm Id}-\Pi_1$, we
have for all $f\in D(P_{R})$,
$$\langle P_{R}\Pi_2 f\,,\,\Pi_2f\rangle_{L^2(K_{R})}\geq
3\|\Pi_2f\|^2_{L^2(K_{R})}\,.$$
\end{prop}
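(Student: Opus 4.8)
The plan is to reduce the periodic operator $P_R$ on $K_R$ to a direct sum of harmonic-oscillator-type operators via a Fourier/Bloch-type decomposition, and to identify $\mathrm{Ker}(P_R-1)$ explicitly with the lowest Landau level restricted to the magnetic torus. First, I would recall that on all of $\R^2$ the operator $-(\nabla-i\Ab_0)^2$ is the Landau Hamiltonian with spectrum $\{2k+1:k\in\mathbb N_0\}$; the point of the quantization condition $R^2=2\pi N$ is that the magnetic translations $T_{Re_1}$ and $T_{Re_2}$ (which implement the periodicity conditions defining $E_R$) commute with each other on the relevant representation, so the magnetic torus picture makes sense and the relevant Hilbert space $L^2(K_R)$ carries a genuine action. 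The eigenvalues of $P_R$ are then necessarily a subset of $\{1,3,5,\dots\}$: indeed any $f\in E_R$ can, after applying the ground-state substitution $f=e^{-|x|^2/4}g$ or the creation/annihilation operator formalism $a=\partial_{z}+\bar z/4$, $a^\dagger=-\partial_{\bar z}+z/4$ with $[a,a^\dagger]$ constant, be analyzed so that $Q_R(f)=\|af\|^2+\|f\|^2\ge\|f\|^2$, giving $\mu_1(P_R)\ge 1$ with equality iff $af=0$; and the orthogonal complement of $\mathrm{Ker}\,a$ in the domain lands in the spectral subspace for eigenvalues $\ge 3$ because applying $a$ once intertwines $P_R$ with $P_R-2$ acting on the next ``Landau band''. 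This is the computation behind the claimed gap $\mu_2(P_R)\ge 3$.

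Next, for the dimension count, I would show $\dim L_R=\dim\{g:\ ag=0,\ g\ e^{-|x|^2/4}\in E_R\}$ equals $N=|K_R|/(2\pi)$. Writing $f=e^{-|x|^2/4}h(z)$ with $h$ entire (the condition $af=0$ forces holomorphy), the magnetic periodicity conditions in \eqref{eq-hc2-space1} translate into quasi-periodicity conditions for $h$ under $z\mapsto z+R$ and $z\mapsto z+iR$ with multipliers of modulus $e^{(\text{quadratic})}$; these are exactly the defining functional equations of theta functions on a torus with $N$ units of flux, and the dimension of that space is $N$ by the standard Riemann–Roch / counting-of-zeros argument (a nonzero such $h$ has exactly $N$ zeros in a fundamental cell, and the space is parametrized by the locations of these zeros subject to one constraint, giving complex dimension $N$). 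Alternatively, and more cheaply, one computes $\operatorname{tr}\,e^{-tP_R}$ by the heat kernel on the magnetic torus and reads off that the multiplicity of each Landau level is $|K_R|/(2\pi)=N$; I would probably just cite \cite{AS} or \cite{Al} for this as the proposition already does, and present the creation/annihilation operator argument for the gap in full.

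Finally, the ``Consequently'' clause is a soft consequence: by the spectral theorem, since $\Pi_2 f$ lies in the spectral subspace of $P_R$ for $[\mu_2(P_R),\infty)\subset[3,\infty)$, one has $\langle P_R\Pi_2 f,\Pi_2 f\rangle\ge 3\|\Pi_2 f\|^2$, and then $\|\Pi_2 f\|^2\le\|f\|^2$ (orthogonal projection) finishes it—note the inequality is stated with $\|f\|^2$ on the right, which follows because $3\|\Pi_2 f\|^2\le 3\|f\|^2$ but one wants the lower bound, so strictly one should write $\langle P_R\Pi_2 f,\Pi_2 f\rangle\ge 3\|\Pi_2 f\|^2$ and keep $\|\Pi_2 f\|^2$; I would state it that way and remark that $\|\Pi_2 f\|\le\|f\|$. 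The main obstacle is the dimension formula $\dim L_R=N$: making the theta-function bijection precise (verifying the quasi-periodicity multipliers match and the zero-counting is sharp) is the only genuinely non-formal step, and this is exactly why the paper defers it to \cite{AS,Al}; in the write-up I would do the same and concentrate the actual work on the spectral gap via the $a$, $a^\dagger$ factorization.
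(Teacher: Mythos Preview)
The paper does not actually prove this proposition: it states the result and refers the reader to \cite{AS} and \cite{Al} for a proof. Your sketch is essentially the standard argument carried out in those references---factorization via the creation/annihilation operators $a=\partial_z+\bar z/4$, $a^\dagger=-\partial_{\bar z}+z/4$ to obtain $\mu_1=1$ and the gap $\mu_2\ge 3$, followed by the identification of $\mathrm{Ker}(P_R-1)$ with a space of theta-type functions whose dimension is $N=R^2/(2\pi)$---so there is nothing to compare: your outline \emph{is} the proof the paper is citing.

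One remark: you are right to flag the ``Consequently'' clause. The displayed inequality in the paper,
\[
\langle P_R\Pi_2 f,\Pi_2 f\rangle_{L^2(K_R)}\ge 3\|f\|^2_{L^2(K_R)},
\]
is a typo; the correct conclusion from $\mu_2\ge 3$ is
\[
\langle P_R\Pi_2 f,\Pi_2 f\rangle_{L^2(K_R)}\ge 3\|\Pi_2 f\|^2_{L^2(K_R)},
\]
and this is indeed how the bound is used downstream (e.g.\ in the proof of Lemma~\ref{prop-hc2-poperator'} and in \eqref{proof:FK}). Your proposed fix is the right one.
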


The next lemma is a consequence of the existence of a spectral gap
between the first two eigenvalues of $P_{R}$. It is proved in
\cite{FK-am}.

\begin{lem}\label{prop-hc2-poperator'}
Given $p\geq 2$, there exists a constant $C_p>0$ such that, for any
$\gamma\in(0,\frac12)$, $R\geq 1$ and $f\in D(P_{R})$ satisfying
\begin{equation}\label{eq-hc2-hypf}
Q_{R}(f)-(1+\gamma)\|f\|^2_{L^2(K_{R})}\leq0\,,\end{equation} the
following estimate holds,
\begin{equation}\label{eq-hc2-1=proj}
\|f-\Pi_1f\|_{L^p(K_{R})}\leq
C_p\sqrt{\gamma}\,\|f\|_{L^2(K_{R})}\,.
\end{equation}
Here $\Pi_1$ is the projection on the space $L_{R}$.
\end{lem}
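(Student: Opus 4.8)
The plan is to exploit the spectral gap recorded in Proposition~\ref{prop-hc2-poperator}, namely $\mu_2(P_R)\geq 3$, together with the hypothesis \eqref{eq-hc2-hypf}. First I would write $f=\Pi_1f+g$ with $g=\Pi_2f=f-\Pi_1f$, and expand the quadratic form using the orthogonal decomposition: since $\Pi_1f\in L_R=\ker(P_R-1)$ we have $Q_R(f)=\|\Pi_1f\|_{L^2}^2+Q_R(g)$ and $\|f\|_{L^2}^2=\|\Pi_1f\|_{L^2}^2+\|g\|_{L^2}^2$. Substituting into \eqref{eq-hc2-hypf} and cancelling the $\|\Pi_1f\|^2$ terms gives $Q_R(g)-(1+\gamma)\|g\|^2_{L^2}\leq\gamma\|\Pi_1f\|_{L^2}^2\leq\gamma\|f\|_{L^2}^2$. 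On the other hand, by the last assertion of Proposition~\ref{prop-hc2-poperator}, $Q_R(g)=\langle P_R\Pi_2f,\Pi_2f\rangle\geq 3\|g\|^2_{L^2}$, so $2\|g\|^2_{L^2}\leq\gamma\|f\|^2_{L^2}$, which already yields the $L^2$ version of \eqref{eq-hc2-1=proj} (with constant $1/\sqrt2$) and, as a byproduct, the full-form bound $Q_R(g)\leq C\gamma\|f\|^2_{L^2}$.

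To upgrade from $L^2$ to $L^p$ I would use that $g=\Pi_2f$ lies in $D(P_R)$ with $Q_R(g)$ and $\|g\|_{L^2}$ both controlled by $\gamma\|f\|^2_{L^2}$, and then appeal to a Sobolev-type embedding for the magnetic operator $P_R$ on the torus $K_R$. Concretely, one has a magnetic Gagliardo--Nirenberg/Sobolev inequality of the form $\|g\|_{L^p(K_R)}\leq C_p\bigl(\|(\nabla-i\Ab_0)g\|_{L^2(K_R)}^{\theta}\|g\|_{L^2(K_R)}^{1-\theta}+\|g\|_{L^2(K_R)}\bigr)$ for $p\geq 2$ and the appropriate interpolation exponent $\theta=\theta(p)\in[0,1)$, with a constant uniform in $R\geq 1$ (this uniformity is where one uses that $R$ is bounded below, so the Poincaré--Sobolev constants do not degenerate; the diamagnetic inequality reduces it to the non-magnetic Sobolev inequality on $K_R$, and scaling controls the $R$-dependence). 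Feeding in $\|(\nabla-i\Ab_0)g\|_{L^2}^2=Q_R(g)\leq C\gamma\|f\|^2_{L^2}$ and $\|g\|_{L^2}^2\leq\tfrac12\gamma\|f\|^2_{L^2}\leq\tfrac12\|f\|^2_{L^2}$ gives $\|g\|_{L^p(K_R)}\leq C_p\gamma^{1/2}\|f\|_{L^2(K_R)}$, possibly after absorbing a harmless power of $\gamma\leq 1/2$.

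The main obstacle is the uniformity of the Sobolev constant $C_p$ in $R$: the square $K_R$ grows with $R$, so one cannot naively quote a fixed-domain embedding. I expect this to be handled either by a covering/partition-of-unity argument — decompose $K_R$ into unit squares, apply a fixed local Sobolev inequality on each, and sum using that the overlap is bounded — or by invoking a known magnetic Sobolev inequality on $\mathbb R^2$ restricted to periodic functions (the lattice constant being $\geq 1$). Once that uniform embedding is in hand, the argument is essentially the two-line form computation above; the case $p=2$ can also simply be read off directly from the spectral gap without any embedding. A secondary minor point is keeping track of the exponent: for $p$ large one needs $\theta$ close to $1$, hence a full power of $\|(\nabla-i\Ab_0)g\|_{L^2}$, but since $Q_R(g)^{1/2}\leq C\gamma^{1/2}\|f\|_{L^2}$ this still produces the stated $\sqrt\gamma$ and nothing worse.
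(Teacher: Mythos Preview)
The paper does not actually prove this lemma: immediately before the statement it says ``It is proved in \cite{FK-am}'' and gives no argument. So there is no proof in the present paper to compare your proposal against.

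That said, your approach is correct and is essentially the standard one (and, to my knowledge, the one used in \cite{FK-am}). The spectral-gap computation is right: from $Q_R(f)=\|\Pi_1f\|_{L^2}^2+Q_R(g)$ and $\|f\|_{L^2}^2=\|\Pi_1f\|_{L^2}^2+\|g\|_{L^2}^2$ together with \eqref{eq-hc2-hypf} and $Q_R(g)\geq 3\|g\|_{L^2}^2$ one gets $(2-\gamma)\|g\|_{L^2}^2\leq\gamma\|f\|_{L^2}^2$, hence $\|g\|_{L^2}\leq\sqrt{2\gamma/3}\,\|f\|_{L^2}$ (your constant $1/\sqrt2$ is a small slip, but immaterial), and likewise $Q_R(g)\leq C\gamma\|f\|_{L^2}^2$. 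The $L^p$ upgrade via the diamagnetic inequality $|\nabla|g||\leq|(\nabla-i\Ab_0)g|$ and a two-dimensional Gagliardo--Nirenberg inequality is exactly the right mechanism; your identification of the $R$-uniformity of the Sobolev constant as the only nontrivial point, and your suggested remedies (covering $K_R$ by unit cells, or using the periodicity of $|g|$ to reduce to a fixed-scale estimate), are both valid ways to handle it. Nothing further is needed.
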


\subsection{The Abrikosov energy.} We introduce the
following  energy functional (the Abrikosov energy),
\begin{equation}\label{eq-hc2-eneAb}
F_{R}(v)=\int_{K_{R}}\left(\frac12|v|^4-|v|^2\right)\,d
x\,.
\end{equation}
The energy $F_R$ will be minimized on the space
$L_{R}$, the eigenspace of the first eigenvalue of the periodic
operator $P_{R}$,
\begin{align}\label{eq-hc2-space2}
L_{R}&=\{u\in E_{R}~:~P_{R}u=u\}\,.
\end{align}

We need the following theorem which we take from \cite{AS, FK-cpde}.

\begin{thm}\label{thm-AS}
Let
\begin{equation}\label{eq-hc2-c(r,t)}
\forall~R>0\,,\quad c(R)=\min\{F_{R}(u)~:~u\in L_{R}\}\,.
\end{equation}
There exists a constant $E_{\rm Ab}\in[-\frac12,0[$ such that,
\begin{equation}\label{eq-hc2-E2}
E_{\rm Ab}=\lim_{\substack{R\to\infty\\R^2/(2\pi)\in\mathbb
N}}\frac{c(R)}{R^2}\,.\end{equation}
\end{thm}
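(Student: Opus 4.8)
The plan is to prove Theorem~\ref{thm-AS} by establishing that the scaled energies $c(R)/R^2$ form a (sub-)convergent family via a subadditivity/slicing argument, together with uniform bounds ensuring the limit is a finite number in $[-\frac12,0)$.

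First I would record the elementary bounds. For the upper bound $c(R)<0$: pick any nonzero $u\in L_R$, say a normalized Landau eigenfunction, and rescale $u\mapsto tu$; then $F_R(tu)=\frac{t^4}{2}\|u\|_4^4 - t^2\|u\|_2^2$, which is negative for small $t>0$, so $c(R)<0$ for every $R$ with $R^2/(2\pi)\in\mathbb N$. For the lower bound $c(R)\ge -\frac12 R^2$: completing the square, $\frac12|v|^4-|v|^2 = \frac12(|v|^2-1)^2 - \frac12 \ge -\frac12$ pointwise, so $F_R(v)\ge -\frac12|K_R| = -\frac12 R^2$ for all $v$. Hence $-\frac12\le c(R)/R^2 < 0$ for all admissible $R$, so the family is bounded and any limit (along the admissible sequence $R^2/(2\pi)\in\mathbb N$) lies in $[-\frac12,0]$. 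The strict inequality $E_{\rm Ab}<0$ at the level of the limit would follow from producing a trial state on $K_R$ whose energy is $\le -cR^2$ for a fixed $c>0$ and all large $R$: one tiles $K_R$ by translates of a fixed small square $K_{R_0}$, places in each tile a translate of a near-optimizer of $c(R_0)$ (this uses that $L_R$ contains functions built by magnetic-periodic superposition of localized Landau states — standard for the lowest Landau level on a torus), and sums, getting $c(R)\le (R/R_0)^2 c(R_0)(1+o(1))$, so $\limsup c(R)/R^2 \le c(R_0)/R_0^2 <0$.

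The heart of the matter is the existence of the limit in \eqref{eq-hc2-E2}. Here I would exploit the lattice structure: for $R_1,R_2$ admissible with $R_1^2=2\pi N_1$, $R_2^2=2\pi N_2$, and $R=R_1R_2/\sqrt{2\pi}$ (so $R^2=2\pi N_1N_2$), the torus $\mathbb R^2/(R\mathbb Z)^2$ fibers over $\mathbb R^2/(R_1\mathbb Z)^2$; restricting an element of $L_{R}$ appropriately, or conversely building an element of $L_R$ out of $L_{R_1}$-data on a coarser sublattice, one gets a near-multiplicativity of the form $c(R)\le N_2\, c(R_1) + (\text{error})$. Combined with the two-sided bound above, a Fekete-type lemma applied along the multiplicative semigroup $\{2\pi N: N\in\mathbb N\}$ (or along $N=2^k$ and then filling in by monotone-type interpolation using the $R$-dependence of the bounds in Proposition~\ref{prop-m0=mp}) yields convergence of $c(R)/R^2$ to $\inf_R c(R)/R^2 =: E_{\rm Ab}$.

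The main obstacle I anticipate is the compatibility of the magnetic-periodic boundary conditions under refinement of the lattice: elements of $L_{R_1}$ are not literally elements of $L_R$ (the flux quantization and the cocycle $e^{iRx_2/2}$ change with $R$), so the "tiling" and "restriction" maps must be defined through an explicit choice of lowest-Landau-level basis (e.g. theta-function-type sections of the relevant line bundle on the torus) and one must control the $L^4$ cross terms between neighboring tiles — these do not vanish but are exponentially small in $R_0$ for the Gaussian-localized states, which is exactly what makes the error term subleading. Carrying this out carefully, with the bookkeeping of the quantization condition $R^2/(2\pi)\in\mathbb N$ at each stage, is the technical core; the references \cite{AS, FK-cpde} presumably handle precisely this, and I would follow their construction, checking that the error in the near-multiplicativity is $o(R^2)$ uniformly.
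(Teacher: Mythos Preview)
The paper does not actually prove Theorem~\ref{thm-AS}; it is quoted from \cite{AS, FK-cpde} without argument, so there is no proof in the paper to compare against. I will therefore comment on the soundness of your sketch on its own merits.

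Your elementary bounds are fine: the pointwise completion of the square gives $c(R)/R^2\ge -\tfrac12$, and the scaling $u\mapsto tu$ for any nonzero $u\in L_R$ gives $c(R)<0$. The strictness of the limit follows once monotonicity along multiples is in place, since then $E_{\rm Ab}\le c(R_0)/R_0^2<0$ for any fixed admissible $R_0$.

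There are, however, two concrete issues in the ``existence of the limit'' part.

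First, your claimed fibering is wrong as stated. With $R_1^2=2\pi N_1$, $R_2^2=2\pi N_2$ and $R=R_1R_2/\sqrt{2\pi}$ one has $R/R_1=\sqrt{N_2}$, which is an integer only when $N_2$ is a perfect square; in general the torus $\mathbb R^2/(R\mathbb Z)^2$ does \emph{not} cover $\mathbb R^2/(R_1\mathbb Z)^2$. The correct and much simpler relation is: if $R=nR_0$ with $n\in\mathbb N$, then iterating the $R_0$-periodicity condition in \eqref{eq-hc2-space1} $n$ times shows $E_{R_0}\subset E_R$, hence $L_{R_0}\subset L_R$, and for $u\in L_{R_0}$ one has $F_R(u)=n^2F_{R_0}(u)$ exactly. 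Thus $c(nR_0)/(nR_0)^2\le c(R_0)/R_0^2$ with no error term and no cross terms whatsoever --- your worries about cocycles and exponentially small interactions between tiles are unnecessary here, because one does not tile with \emph{different} localized states but simply extends the \emph{same} $R_0$-periodic state.

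Second, and this is the genuine remaining gap, the monotonicity along integer multiples only gives convergence of $c(R)/R^2$ along $R\in R_0\mathbb N$, i.e.\ along $N\in N_0\{n^2:n\in\mathbb N\}$, and the limit could a priori depend on $R_0$. To obtain the full limit over all $R$ with $R^2/(2\pi)\in\mathbb N$ one must supplement this with a comparison between $c(R)$ and $c(R')$ for nearby admissible $R,R'$ (for instance, embed a minimizer on $K_{R'}$ into $L_R$ for the smallest multiple $R$ of a fixed $R_0$ exceeding $R'$, controlling the mismatch on the thin frame $K_R\setminus K_{R'}$). This step is routine but not automatic, and your sketch does not address it; your allusion to ``monotone-type interpolation using Proposition~\ref{prop-m0=mp}'' is not quite to the point, since that proposition concerns $m_0$ and $m_{\rm p}$, not $c(R)$ directly. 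The cited references carry out exactly this kind of two-step argument (monotonicity along multiples plus a boundary-layer comparison), which is what you should implement.
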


The energy $c(R)$ is a specific Abrikosov energy corresponding to
the square lattice. The Abrikosov energy can be defined over any
parallelogram lattice and is minimized for the triangular lattice,
\cite{AS, Sig}. In the regime of large area $R\to\infty$, the
lattice shape is unimportant to leading order, \cite{AS}.

It is observed in \cite{FK-cpde} that there is a relationship
between the ground state energies $m_{\rm p}(b,R)$ and $c(R)$,
namely that $[1-b]^2c(R)$ is a valid approximation of $m_{\rm
p}(b,R)$ in the regime $[1-b]^2R^4\ll 1$. This is recalled in the
next theorem.

\begin{thm}\label{thm-mp=c}
Let $m_{\rm p}(b,R)$ and $c(R)$ be as   introduced in
\eqref{eq-mp(b,R)} and \eqref{eq-hc2-c(r,t)} respectively. For all
$b>0$ and $R>0$, we have,
$$m_{\rm p}(b,R)\leq [1-b]_+^2c(R)\,.$$
Furthermore, there exist universal constants $\epsilon_0\in(0,1)$
and $C>0$ such that, if $R\geq 2$, $b\geq 1-\epsilon_0$,  and
$0<\sigma<1/2$, then,
$$m_{\rm p}(b,R)\geq [1-b]_+^2\bigg((1+2\sigma)c(R)-C\sigma^{-3}(1-b)^2R^4\bigg)\,.$$
\end{thm}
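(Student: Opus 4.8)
The plan is to prove the two inequalities separately. The upper bound $m_{\rm p}(b,R)\leq [1-b]_+^2 c(R)$ is obtained by a simple scaling test function. Let $v\in L_R$ be a minimizer of $F_R$, and take $u=\sqrt{[1-b]_+}\,v$ as a trial state in \eqref{eq-mp(b,R)}. Since $v$ lies in the eigenspace $L_R$, we have $\|(\nabla-i\Ab_0)v\|_{L^2(K_R)}^2=\|v\|_{L^2(K_R)}^2$, so the magnetic term and the quadratic term in $G_{b,K_R}(u)$ combine: $b\|(\nabla-i\Ab_0)u\|^2-\|u\|^2=[1-b]_+\big(b-1\big)\|v\|^2=-[1-b]_+^2\|v\|^2$ when $b<1$ (and the whole functional is $\geq 0$, with $u=0$ admissible, when $b\geq 1$). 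Adding $\frac12|u|^4=\frac12[1-b]_+^2|v|^4$ gives exactly $G_{b,K_R}(u)=[1-b]_+^2 F_R(v)=[1-b]_+^2 c(R)$, which proves the bound.

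For the lower bound, fix $b\in[1-\epsilon_0,1)$ (if $b\geq1$ the statement is vacuous since $c(R)<0$ and $m_{\rm p}\geq0$ is false — actually one checks $m_{\rm p}(b,R)\geq0$ only via the completed square, so I should restrict to $b<1$ where $[1-b]_+=1-b>0$), let $u$ be a minimizer of $m_{\rm p}(b,R)$, and decompose $u=\Pi_1 u + \Pi_2 u=:w+r$ along $L_R=\mathrm{Ker}(P_R-1)$ and its orthogonal complement. From $G_{b,K_R}(u)\leq 0$ (by the upper bound just proved, since $c(R)<0$) together with Proposition~\ref{prop-hc2-poperator}, one gets $Q_R(u)\leq \frac1b\|u\|_{L^2}^2\leq(1+C(1-b))\|u\|_{L^2}^2$, so Lemma~\ref{prop-hc2-poperator'} applies with $\gamma\approx(1-b)$, yielding $\|r\|_{L^p(K_R)}\leq C_p\sqrt{1-b}\,\|u\|_{L^2(K_R)}$ for every $p\geq2$; combined with the a priori bound $\|u\|_{L^\infty}\leq C\sqrt{1-b}$ from \eqref{eq:u<1-b} (after extending $u$ by magnetic periodicity), this also gives $\|u\|_{L^2(K_R)}^2\leq C(1-b)R^2$ and hence $\|r\|_{L^p(K_R)}\leq C(1-b)\,R^{2/p}$. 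Now expand
\begin{equation*}
G_{b,K_R}(u)=\underbrace{b Q_R(r)-(b-1)\|r\|^2}_{\geq\, 2(1-b)\|r\|^2}\;+\;\big((b-1)\|w\|^2+\tfrac12\|u\|_{L^4}^4\big)\,,
\end{equation*}
using $Q_R(w)=\|w\|^2$, $Q_R(r)\geq3\|r\|^2$, $b Q_R(r)-(b-1)\|r\|^2\geq(3b-b+1)\|r\|^2\geq2(1-b)\|r\|^2$ since $b\geq1-\epsilon_0$, and that $w\perp r$ makes the cross terms in $Q_R$ and in $\|u\|^2$ vanish. The term $(b-1)\|w\|^2+\frac12\|u\|_{L^4}^4$ is compared to $[1-b]^2 F_R(w)=[1-b]^2(\frac12\|w\|_{L^4}^4-\|w\|^2)$; the discrepancy comes from replacing $\|u\|_{L^4}^4$ by $[1-b]^2\|w\|_{L^4}^4$ after rescaling $w\mapsto[1-b]^{-1/2}w$, and from the fact that $(b-1)\|w\|^2=-(1-b)\|w\|^2$ rather than $-(1-b)^2\|w\|^2$. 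The first discrepancy is controlled by a Cauchy--Schwarz/Young splitting $|\,\|u\|_{L^4}^4-\|w\|_{L^4}^4\,|\leq C\|r\|_{L^4}\big(\|w\|_{L^4}^3+\|r\|_{L^4}^3\big)$, bounded by $\sigma\|w\|_{L^4}^4+C\sigma^{-3}\|r\|_{L^4}^4\leq \sigma\|w\|_{L^4}^4+C\sigma^{-3}(1-b)^4R^4$ after absorbing; the factor $[1-b]^2$ is then restored by the scaling identity $\|w\|_{L^4}^4=[1-b]^2\|[1-b]^{-1/2}w\|_{L^4}^4$ and $\|w\|^2=[1-b]\,\cdots$ — here one must be careful that $\|w\|^2$ scales with one power of $(1-b)$ while $c(R)R^{-2}$ is order one, so $(b-1)\|w\|^2$ already carries the right homogeneity only after one notes $\|w\|^2\leq\|u\|^2\leq C(1-b)R^2$ is itself $O((1-b)^2 R^2/(1-b))$... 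Assembling, $G_{b,K_R}(u)\geq [1-b]^2\big((1+2\sigma)\min_{L_R}F_R+\text{scaled }2(1-b)\|r\|^2\text{-gain}\big)-C\sigma^{-3}(1-b)^2R^4$, and dropping the nonnegative $\|r\|^2$-term gives the claimed inequality with constant $C$ absorbing all universal factors.

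The main obstacle is the bookkeeping of the homogeneity in $(1-b)$ in the lower bound: the projected part $w$ must be rescaled by $[1-b]^{-1/2}$ so that $F_R$ is evaluated at an $L_R$-function of the correct normalization, and every error term must be shown to carry at least the factor $[1-b]^2$ (this is where the a priori $L^\infty$ and $L^p$ bounds on $r$, giving powers $(1-b)^{2}$ with room to spare, and the cubic Young inequality with the $\sigma^{-3}$ weight, are essential). A secondary point is justifying the use of \eqref{eq:u<1-b}: the minimizer of $m_{\rm p}(b,R)$ extended by magnetic periodicity solves the Ginzburg--Landau equation on all of $\R^2$, so Theorem~3.1 of \cite{FH-jems} applies verbatim, exactly as recorded after \eqref{eq:u<1-b}.
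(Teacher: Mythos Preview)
Your overall strategy is the same as that of \cite{FK-cpde} (and reproduced in the proof of Theorem~\ref{thm:Al} here): the trial state $u=\sqrt{[1-b]_+}\,v$ with $v\in L_R$ a minimizer of $F_R$ gives the upper bound, and for the lower bound one decomposes a periodic minimizer $u=\Pi_1u+\Pi_2u=:w+r$, applies Lemma~\ref{prop-hc2-poperator'} together with the $L^\infty$ bound \eqref{eq:u<1-b}, and uses a Young inequality with parameter $\sigma$ to replace $\|u\|_{L^4}^4$ by $(1-\sigma)\|w\|_{L^4}^4$ minus an error of size $C\sigma^{-3}(1-b)^4R^4$.

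Two minor slips: the orthogonal expansion of $G_{b,K_R}(u)$ gives $bQ_R(r)-\|r\|_{L^2}^2$ (not $bQ_R(r)-(b-1)\|r\|_{L^2}^2$), though the conclusion that this term is $\geq(3b-1)\|r\|_{L^2}^2\geq0$ survives; and $\|r\|_{L^p}\leq C_p\sqrt{1-b}\,\|u\|_{L^2}\leq C(1-b)R$ independently of $p$, not $C(1-b)R^{2/p}$. More importantly, your rescaling step does not close: the discussion of ``the right homogeneity'' trails off without producing the coefficient $(1+2\sigma)c(R)$. The correct move (as in \cite{FK-cpde}) is purely algebraic: set $w=\sqrt{(1-b)/(1-\sigma)}\,\tilde u$ with $\tilde u\in L_R$, so that
\[
-(1-b)\|w\|_{L^2}^2+\tfrac{1-\sigma}{2}\|w\|_{L^4}^4=\frac{(1-b)^2}{1-\sigma}\,F_R(\tilde u)\geq \frac{(1-b)^2}{1-\sigma}\,c(R)\geq (1-b)^2(1+2\sigma)\,c(R),
\]
using $c(R)<0$ and $(1-\sigma)^{-1}\leq 1+2\sigma$ for $0<\sigma<1/2$. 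No a~priori bound on $\|w\|_{L^2}$ enters this step; the $L^\infty$ estimate is needed only to control the error term, via $C\sigma^{-3}\|r\|_{L^4}^4\leq C\sigma^{-3}(1-b)^4R^4=[1-b]_+^2\cdot C\sigma^{-3}(1-b)^2R^4$.
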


\section{Energy in small squares}\label{hc2-sec-ub}

In this section, the notation  $Q_\ell$ stands for  a square in
$\R^2$ of side length $\ell>0$
$$Q_\ell=(-\ell/2+a_1,a_1+\ell/2)\times (-\ell/2+a_2,a_2+\ell/2)\,,$$
where $a=(a_1,a_2)\in\R^2$.

If $(\psi,\Ab)\in H^1(\Omega;\C)\times  H^1(\Omega;\R^2)$, we denote
by $e(\psi,\Ab)=|(\nabla -i\kappa
H\Ab)\psi|^2-\kappa^2|\psi|^2+\frac{\kappa^2}2|\psi|^4$.
Furthermore, we define the Ginzburg-Landau energy of $(\psi,\Ab)$ in
a domain $\mathcal D\subset\Omega$ as follows,
\begin{equation}\label{eq-GLen-D}
\mathcal E(\psi,\Ab;\mathcal D)=\int_{\mathcal D} e(\psi,\Ab)\,dx+(\kappa H)^2\int_{\Omega}|\curl\Ab-1|^2\,dx\,.
\end{equation}
Also we introduce the functional,
\begin{equation}\label{eq-GLe0}
\mathcal E_0(\psi,\Ab;\mathcal D)=\int_{\mathcal D}\left(|(\nabla-i\kappa H\Ab)\psi|^2-\kappa^2|\psi|^2+\frac{\kappa^2}2|\psi|^4\right)\,dx\,.
\end{equation}
The results of this section will be derived under the assumption
that the magnetic field $H$ satisfies,
\begin{equation}\label{eq-as-H1}
H=\kappa-\mu(\kappa)\sqrt{\kappa}\,,\end{equation} where the
function $\mu(\kappa)$ satisfies,
\begin{equation}\label{eq-as-H2}
\liminf_{\kappa\to\infty}\mu(\kappa)=\infty\quad{\rm and}\quad
\limsup_{\kappa\to\infty}\frac{\mu(\kappa)}{\sqrt{\kappa}}=0\,.\end{equation}
The assumptions \eqref{eq-as-H1}-\eqref{eq-as-H2} are equivalent to
those in Theorem~\ref{thm:main}. As mentioned in the introduction,
\eqref{eq-as-H1}-\eqref{eq-as-H2} cover a range of the parameter $H$
wider than the one assumed in \cite{Al}. In that direction, the
results here are stronger than those of \cite{Al}.

\begin{prop}\label{prop-ub}
Suppose that the magnetic field $H$ satisfies \eqref{eq-as-H1} and
\eqref{eq-as-H2}. There exist positive constants $C$, $R_0$ and
$\kappa_0$ such that the following is true. Let $\kappa$, $\ell$,
$\rho$
 and $\delta$ satisfy $\kappa\geq \kappa_0$, $R_0 \kappa^{-1} \leq \ell
\leq 1/2$, $\rho\in(0,1)$ and $\delta\in\,(0,1)$.  If $(\psi,\Ab)\in
H^1(\Omega;\C)\times H^1(\Omega;\R^2)$ is a minimizer of
\eqref{eq-3D-GLf}, and $Q_{\ell}\subset\Omega$ is a square of side
length $\ell$ satisfying,
$$ Q_\ell\subset \Omega_{\kappa,\rho}=\{x\in\Omega~:~{\rm
dist}(x,\partial\Omega)\geq \kappa^{-1+\rho}\}\,,$$
 then,
$$
\frac1{|Q_\ell|}\mathcal E_0(\psi,\Ab;Q_{\ell})\leq (1+\delta)\,
[\kappa-H]^2\,\frac{c(R)}{R^2}
+C\Big(\delta^{-1}\ell^2\kappa+\delta\kappa+\ell^{-1}\Big) [\kappa
-H]\,.$$ Here $R=\sqrt{\kappa H}\ell$,  $c(R)$ is the
function introduced in \eqref{eq-hc2-c(r,t)} and $\mathcal E_0$ is
the functional introduced  in \eqref{eq-GLe0}.
\end{prop}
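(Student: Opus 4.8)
The plan is the classical trial‑state upper bound: since $(\psi,\Ab)$ minimizes $\mathcal E$, for any competitor $\tilde\psi$ that equals $\psi$ outside $Q_\ell$ (with $\Ab$ unchanged) one has $\mathcal E_0(\psi,\Ab;Q_\ell)\le\mathcal E_0(\tilde\psi,\Ab;Q_\ell)$, so the whole task is to build $\tilde\psi$ on $Q_\ell$ and evaluate its energy. Write $b=H/\kappa$ (so $b\to1$ and $[1-b]=(\kappa-H)/\kappa$), $R=\sqrt{\kappa H}\,\ell$, and $\Ab_0^a(x):=\Ab_0(x-a)$. Since $\curl(\Ab-\Ab_0^a)=\curl\Ab-1$ and $\|\curl\Ab-1\|_{L^\infty(\Omega)}\le C\kappa^{-1}$ by Lemma~\ref{lem-hc2-FoHe}, on the simply connected square $Q_\ell$ one may write $\Ab-\Ab_0^a=\Ab^\sharp+\nabla\phi$ with $\curl\Ab^\sharp=\curl\Ab-1$, $\|\Ab^\sharp\|_{L^\infty(Q_\ell)}\le C\ell\kappa^{-1}$ (Poincar\'e's lemma — the $\ell$‑gain here is what makes the gauge error fit), and $\phi$ a scalar. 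Besides Theorem~\ref{thm:psi;1-b} I shall use its companion $\|(\nabla-i\kappa H\Ab)\psi\|_{L^\infty(\Omega_\kappa)}\le C\kappa[1-H/\kappa]^{1/2}$, which follows from interior elliptic estimates applied to the first equation in \eqref{eq-hc2-GLeq}, and which I take for granted.

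For the competitor, let $u$ minimize $m_0(b,R)$, so $u\in H^1_0(K_R)$ and $G_{b,K_R}(u)=m_0(b,R)\le0$; set $\widehat v(x)=u(\sqrt{\kappa H}(x-a))$, $v=e^{i\kappa H\phi}\widehat v$ on $Q_\ell$, extended by $0$ (so $v\in H^1_0(Q_\ell)$), fix a cutoff $\chi$ with $\chi\equiv1$ on $Q_{\ell-d}$, $\chi\equiv0$ off $Q_\ell$, $|\nabla\chi|\le C/d$, where $d=C_1\kappa^{-1}$ with $C_1$ fixed and $R_0$ large enough that $d<\ell$, and put $\tilde\psi=\chi v+(1-\chi)\psi$. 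Then $\mathcal E_0(\psi,\Ab;Q_\ell)\le\mathcal E_0(v,\Ab;Q_{\ell-d})+\int_{Q_\ell\setminus Q_{\ell-d}}e(\tilde\psi,\Ab)$. The phase $\phi$ was inserted so that $|(\nabla-i\kappa H\Ab)v|=|(\nabla-i\kappa H(\Ab_0^a+\Ab^\sharp))\widehat v|$ and $|v|=|\widehat v|$ on $Q_\ell$; expanding the square with a Cauchy–Schwarz weight $\delta$, dropping $\Ab^\sharp$, and using the exact scaling identity $\mathcal E_0(\widehat v,\Ab_0^a;Q_\ell)=b^{-1}G_{b,K_R}(u)=b^{-1}m_0(b,R)$ gives $\mathcal E_0(v,\Ab;Q_\ell)\le b^{-1}m_0(b,R)+\delta\!\int_{K_R}\!|(\nabla-i\Ab_0)u|^2+C\delta^{-1}(\kappa H)^2\|\Ab^\sharp\|_\infty^2\!\int_{Q_\ell}\!|v|^2$. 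From $\|u\|_{L^\infty}\le C[1-b]^{1/2}$ (eq.~\eqref{eq:u<1-b}) and $m_0(b,R)\le0$ one gets $\int_{K_R}|(\nabla-i\Ab_0)u|^2\le b^{-1}\!\int_{K_R}|u|^2\le C(1-b)R^2$ and $\int_{Q_\ell}|v|^2=(\kappa H)^{-1}\!\int_{K_R}|u|^2\le C(1-b)\ell^2$, so this correction is $\le C(\delta+\delta^{-1}\ell^2)(1-b)\kappa^2\ell^2$. Finally Proposition~\ref{prop-m0=mp} and Theorem~\ref{thm-mp=c} yield $b^{-1}m_0(b,R)\le b^{-1}[1-b]_+^2c(R)+Cb^{-1}(1-b)R=[\kappa-H]^2R^{-2}c(R)\,|Q_\ell|+C(1-b)R$ (using $b^{-1}[1-b]^2=(\kappa-H)^2/(\kappa H)=[\kappa-H]^2R^{-2}|Q_\ell|$ and $b^{-1}\le2$), and since $-R^2\le c(R)\le0$ (Theorem~\ref{thm-AS}), $[\kappa-H]^2R^{-2}c(R)\le(1+\delta)[\kappa-H]^2R^{-2}c(R)+\delta[\kappa-H]^2$.

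It remains to absorb the shrinking and the layer $A=Q_\ell\setminus Q_{\ell-d}$. First, $\mathcal E_0(v,\Ab;Q_{\ell-d})\le\mathcal E_0(v,\Ab;Q_\ell)+\kappa^2\!\int_A|v|^2\le\mathcal E_0(v,\Ab;Q_\ell)+C\kappa^2(1-b)\ell d$. On $A$ one discards $-\kappa^2|\tilde\psi|^2\le0$, expands $(\nabla-i\kappa H\Ab)\tilde\psi=(\nabla\chi)(v-\psi)+\chi(\nabla-i\kappa H\Ab)v+(1-\chi)(\nabla-i\kappa H\Ab)\psi$, and estimates: $|\nabla\chi|\le C/d$ with $|v|,|\psi|\le C[1-b]^{1/2}$ (Theorem~\ref{thm:psi;1-b}, \eqref{eq:u<1-b}) give $\int_A|\nabla\chi|^2|v-\psi|^2\le C(1-b)\ell/d$; the companion bound gives $\int_A|(\nabla-i\kappa H\Ab)\psi|^2\le C\kappa^2(1-b)\ell d$; and since $|(\nabla-i\kappa H\Ab)v|\le|(\nabla-i\kappa H\Ab_0^a)\widehat v|+\kappa H|\Ab^\sharp||v|$ and (by the equation for $u$ and $\|u\|_\infty\le C[1-b]^{1/2}$, Caccioppoli on the boundary strip of $K_R$ of width $\sqrt{\kappa H}\,d\approx C_1$) $\int_A|(\nabla-i\kappa H\Ab_0^a)\widehat v|^2\le C(1-b)R$, one gets $\int_A|(\nabla-i\kappa H\Ab)v|^2\le C(1-b)R+C\kappa^2(1-b)\ell^3d$; together with $\tfrac{\kappa^2}2\int_A|\tilde\psi|^4\le C\kappa^2(1-b)^2\ell d$ this gives $\int_Ae(\tilde\psi,\Ab)\le C(1-b)\kappa\ell$ when $d=C_1\kappa^{-1}$ and $R\le\kappa\ell$. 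Collecting all the above, dividing by $|Q_\ell|=\ell^2$, and using $[\kappa-H]=\kappa(1-b)$, $\ell\le1/2$, every remainder is of one of the forms $C\delta^{-1}\ell^2\kappa[\kappa-H]$, $C\delta\kappa[\kappa-H]$, $C\ell^{-1}[\kappa-H]$, which is the claim.

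The step I expect to be the crux is precisely the transition layer near $\partial Q_\ell$: the kinetic energy of $\psi$ there must carry a genuine $[1-H/\kappa]$ gain, which forces the use of the $L^\infty$ gradient bound on $\psi$ rather than the cruder $\|(\nabla-i\kappa H\Ab)\psi\|_\infty\le C\kappa$ of Lemma~\ref{lem-hc2-FoHe}; the kinetic energy of the model $v$ there is controlled only through a Caccioppoli/interior estimate for the Dirichlet minimizer $u$; and one must verify that the single scale $d\approx\kappa^{-1}$ makes all of $(1-b)\ell/d$, $\kappa^2(1-b)\ell d$, $(1-b)R$, $(\delta+\delta^{-1}\ell^2)(1-b)\kappa^2\ell^2$ and $(1-b)R/\ell^2$ fit under $C(\delta^{-1}\ell^2\kappa+\delta\kappa+\ell^{-1})[\kappa-H]$ simultaneously, for every admissible $(\ell,\delta,\kappa)$ — which works because in this range the allowed error is already of the order of, or larger than, the leading term. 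A secondary point requiring care is not performing the gauge reduction globally: keeping $\nabla\phi$ inside the trial function $v$ is what keeps the smallness of $\Ab^\sharp$ at the scale $\ell\kappa^{-1}$ rather than only $\kappa^{-1}$.
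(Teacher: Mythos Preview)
Your argument is correct and follows the same trial–state strategy as the paper (compare $(\psi,\Ab)$ with a competitor built from the Dirichlet minimizer $u_R$ of $G_{b,K_R}$, pass to $m_0(b,R)$ via the exact rescaling $\mathcal E_0(\widehat v,\Ab_0^a;Q_\ell)=b^{-1}m_0(b,R)$, and then invoke Proposition~\ref{prop-m0=mp} and Theorem~\ref{thm-mp=c}). The two proofs differ only in how the matching layer is handled.

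In the paper the transition layer is placed \emph{outside} $Q_\ell$, in the annulus $Q_{\ell+(\kappa H)^{-1/2}}\setminus Q_\ell$: the competitor equals $u_R(\sqrt{\kappa H}\,x)$ on all of $Q_\ell$ and equals $\eta_R\psi$ in the annulus. The difference $\mathcal E_0(\eta_R\psi,\Ab;\cdot)-\mathcal E_0(\psi,\Ab;\cdot)$ on the annulus is then computed via the IMS localization identity together with the first Ginzburg--Landau equation, which turns the kinetic part into integrals of $|\psi|^2$ and $|\psi|^4$ only; the term $-\int|(\nabla-i\kappa H\Ab)\psi|^2$ comes with a favorable sign and is simply dropped. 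Thus the paper needs nothing beyond $\|\psi\|_{L^\infty(\Omega_\kappa)}\le C[1-H/\kappa]^{1/2}$ and the crude bound of Lemma~\ref{lem-hc2-FoHe}. Your version instead puts the layer \emph{inside} $Q_\ell$ and interpolates $\chi v+(1-\chi)\psi$; this forces you to control $\int_A|(\nabla-i\kappa H\Ab)\psi|^2$ and $\int_A|(\nabla-i\kappa H\Ab)v|^2$ directly, which you do by importing the sharper interior gradient bound $\|(\nabla-i\kappa H\Ab)\psi\|_{L^\infty(\Omega_\kappa)}\le C\kappa[1-H/\kappa]^{1/2}$ and a boundary Caccioppoli estimate for $u_R$. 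Both inputs are legitimate (the first is a standard elliptic rescaling consequence of Theorem~\ref{thm:psi;1-b}, the second is exactly the computation you sketch), so the argument closes; the paper's route simply avoids these two extra ingredients at the price of the IMS/equation trick. Your own diagnosis of the crux is accurate.
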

\begin{proof}
Notice that the energy $\mathcal E_0$ is invariant under gauge
transformations\break $(\psi,\Ab)\mapsto
(e^{i\chi}\psi,\Ab+\nabla\chi)$. After performing a gauge
transformation, we may suppose that the magnetic potential $\Ab$
satisfies (see \cite[(5.31)]{FK-am}),
\begin{equation}\label{eq-3D-est-g'}
|\Ab(x)-\Ab_0(x)|\leq
\frac{C\ell}{\sqrt{\kappa H}}\,,\quad(x\in Q_{\ell})\,,
\end{equation}
where $\Ab_0$ is the magnetic potential introduced in
\eqref{eq-hc2-mpA0}.

Without loss of generality, we may assume that,
$$Q_{\ell}=(-\ell/2,\ell/2)\times (-\ell/2,\ell/2)\subset\Omega_{\kappa,\rho}\,.$$
Let  $b=H/\kappa$, $R=\ell\sqrt{\kappa H}$ and $u_R\in H^1_0(Q_R)$
be a minimizer of the functional $\mathcal G_{b,Q_R}$ introduced in
\eqref{eq-LF-2D}, i.e. $\mathcal G_{b,Q_R}(u_R)=m_0(b,R)$ where
$m_0(b,R)$ is introduced in \eqref{eq-m0(b,R)}.

Let $\chi_R\in C_c^\infty(\R^2)$ be a cut-off function  such that,
$$0\leq\chi_R\leq 1\quad{\rm in~}\R^2\,,\quad {\rm supp}\,\chi_R\subset Q_{R+1}\,,\quad  \chi_R=1\quad{\rm in~}
Q_{R}\,.$$ and  $|\nabla\chi_R|\leq C$ for some universal constant
$C$. Let $\eta_R(x)=1-\chi_R(x\sqrt{\kappa H})$ for all $x\in\R^2$.
Recall that $(\psi,\Ab)$ is a minimizer of the functional in
\eqref{eq-3D-GLf}. We introduce the function (whose construction is
inspired from \cite{SS02}),
\begin{equation}\label{eq-test-conf}
\varphi(x)=\mathbf 1_{Q_{\ell}}(x) u_R(x\sqrt{\kappa H})+\eta_R(x) \psi(x)\,,\quad (x\in\Omega)\,.
\end{equation}
Notice that by construction, $\varphi$ satisfies,
$$\varphi(x)=\left\{
\begin{array}{ll}
u_R(\,x\sqrt{\kappa H}\,)&{\rm if~}x\in Q_{\ell}\,,\\
\eta_R(\,x\sqrt{\kappa H}\,)\psi(x)&{\rm if~}x\in Q_{\ell+\frac1{\sqrt{\kappa H}}}\setminus Q_{\ell}\,,\\
\psi(x)&{\rm if~}x\in \Omega\setminus Q_{\ell+\frac1{\sqrt{\kappa
H}}}\,.\end{array}\right.$$ This allows us to get that, for all
$\delta\in(0,1)$ (see \cite[(4.13)]{FK-cpde},
\begin{equation}\label{eq-GLub}
\mathcal E(\varphi,\Ab;\Omega)\leq \mathcal E(\psi,\Ab;\Omega\setminus Q_{\ell})+\frac{(1+\delta)}{b}
m_0(b,R)+ r_0(\kappa)\,,
\end{equation}
where $m_0(b,R)$ is defined in \eqref{eq-m0(b,R)}, and for some
constant $C$, $r_0(\kappa)$ is given as follows,
\begin{multline}\label{eq-ub-r0}
r_0(\kappa)=C\Big[\delta^{-1}(\kappa
H)^2\|(\Ab-\Ab_0)u_R\|_{L^2(Q_\ell)}^2+\delta\ell^2\kappa^2\|u_R\|_\infty^2\Big]\\
+\left[\mathcal E_0\big(\,\eta_R(x\sqrt{\kappa H}\,)\psi,\Ab;
Q_{\ell+\frac1{\sqrt{\kappa H}}}\setminus Q_\ell\big)-\mathcal
E_0\big(\psi,\Ab;Q_{\ell+\frac1{\sqrt{\kappa H}}}\setminus
Q_\ell\big)\right]\,.
\end{multline}
Since $(\psi,\Ab)$ is a minimizer, we have,
$$\mathcal E(\psi,\Ab)\leq \mathcal E(\varphi,\Ab;\Omega)\,.$$
Since $\mathcal E(\psi,\Ab;\Omega)=\mathcal
E(\psi,\Ab;\Omega\setminus Q_{\ell})+\mathcal
E_0(\psi,\Ab;Q_{\ell})$, the estimate \eqref{eq-GLub} gives us,
$$\mathcal  E_0(\psi,\Ab;Q_{\ell})\leq \frac{(1+\delta)}{b}
m_0(b,R)+ r_0(\kappa)\,.$$ We use the estimates in \eqref{eq:m0mP}
and Theorem~\ref{thm-mp=c} to write,
\begin{equation}\label{eq-ub-final}
\mathcal E_0(\psi,\Ab,Q_{\ell})\leq
\frac{(1+\delta)}{b}\Big([1-b]_+^2\,c(R)
+C[1-b]_+R\Big)+r_0(\kappa)\,.\end{equation} Next we control the
error term $r_0(\kappa)$. The first term in $r_0(\kappa)$ is
controlled by using  \eqref{eq:u<1-b} and \eqref{eq-3D-est-g'}. That
way we write,
$$\delta^{-1}(\kappa
H)^2\|(\Ab-\Ab_0)u_R\|_{L^2(Q_\ell)}^2+\delta\ell^2\kappa^2\|u_R\|_\infty^2\leq
C\Big(\delta^{-1}\ell^4\kappa^2+\delta\ell^2\kappa^2\Big)[1-b]\,.$$ The second
term in $r_0(\kappa)$ is controlled as follows. An integration by
parts allows us to write,
\begin{multline*}
\mathcal E_0\big(\,\eta_R(x\sqrt{\kappa
H}\,)\psi,\Ab;Q_{\ell+\frac1{\sqrt{\kappa H}}}\setminus
Q_\ell\big)=\frac{\kappa^2}{2}\int_{Q_{\ell+\frac1{\sqrt{\kappa
H}}}\setminus Q_\ell}\left(\eta_R^4(x\sqrt{\kappa
H})-2\eta_R^2(x\sqrt{\kappa
H})\right)|\psi|^4\,dx\\+\int_{Q_{\ell+\frac1{\sqrt{\kappa
H}}}\setminus Q_\ell}|\nabla\,\eta_R(x\sqrt{\kappa
H})|^2\,|\psi|^2\,dx\,.
\end{multline*}
Consequently, we get
\begin{align*}
\mathcal E_0\big(\,\eta_R(x\sqrt{\kappa
H}\,)\psi,\Ab;Q_{\ell+\frac1{\sqrt{\kappa H}}}\setminus
Q_\ell\big)&-\mathcal E_0\big(\psi,\Ab;Q_{\ell+\frac1{\sqrt{\kappa H}}}\setminus
Q_\ell\big)\\
&=\frac{\kappa^2}{2}\int_{Q_{\ell+\frac1{\sqrt{\kappa H}}}\setminus
Q_\ell}\left(\eta_R^4(x\sqrt{\kappa H})-2\eta_R^2(x\sqrt{\kappa H})-1\right)|\psi|^4\,dx\\
&\hskip0.5cm+\kappa^2\int_{Q_{\ell+\frac1{\sqrt{\kappa H}}}\setminus
Q_\ell}|\psi|^2\,dx-\int_{Q_{\ell+\frac1{\sqrt{\kappa H}}}\setminus
Q_\ell}|(\nabla-i\kappa H\Ab)\psi|^2\,dx\\
&\hskip0.5cm+\int_{Q_{\ell+\frac1{\sqrt{\kappa H}}}\setminus
Q_\ell}|\nabla\,\eta_R(x\sqrt{\kappa H})|^2\,|\psi|^2\,dx\\
&\leq \kappa^2\int_{Q_{\ell+\frac1{\sqrt{\kappa H}}}\setminus
Q_\ell}|\psi|^2\,dx+C\ell\kappa[1-b]\,.
\end{align*}
We use Theorem~\ref{thm:Linfty-bulk} to write,
$$
\mathcal E_0\big(\,\eta_R(x\sqrt{\kappa
H}\,)\psi,\Ab;Q_{\ell+\frac1{\sqrt{\kappa H}}}\setminus
Q_\ell\big)-\mathcal E_0\big(\psi,\Ab;Q_{\ell+\frac1{\sqrt{\kappa H}}}\setminus
Q_\ell\big)\leq C\ell\kappa\,[1-b]\,.
$$
Therefore, the term $r_0(\kappa)$ satisfies,
$$r_0(\kappa)\leq
C\Big(\delta^{-1}\ell^2\kappa^2+\delta\kappa^2+\ell^{-1}\kappa\Big)
[1-b]\,\ell^2\,.$$ Remembering the definition of $b=H/\kappa$ and the assumption in \eqref{eq-as-H1}-\eqref{eq-as-H2} on $H$, we
finish the proof of Proposition~\ref{prop-ub}. \end{proof}

The proof of the next proposition is similar to that of
Proposition~\ref{prop-ub}.

\begin{prop}\label{ub:A-A0}
Suppose that the assumptions in Proposition~\ref{prop-ub} are true.
Let $\chi_\ell\in C_c^\infty(Q_\ell)$ be a cut-off function
satisfying,
$$\chi_\ell=1\quad{\rm in~}Q_{\ell-\frac1{\sqrt{\kappa H}}}\,,\quad
0\leq \chi_\ell\leq 1\,,\quad |\nabla \chi_\ell|\leq c\sqrt{\kappa
H}\quad{\rm in ~}Q_\ell\,,$$ where $c$ is a universal constant.

There exists a constant $C$ such that, if $(\psi,\Ab)\in
H^1(\Omega;\C)\times H^1(\Omega;\R^2)$ is a minimizer of
\eqref{eq-3D-GLf}, then,
$$
\frac1{|Q_\ell|}\mathcal E_0(\chi_\ell\,\psi,\Ab_0;Q_{\ell})\leq
\frac{1+\delta}{|Q_\ell|}\,\mathcal E_0(\psi,\Ab;Q_{\ell})
+C\Big(\delta^{-1}\ell^2\kappa+\delta\kappa+\ell^{-1}\Big) [\kappa
-H]\,.
$$
\end{prop}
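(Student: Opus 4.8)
The plan is to reverse the roles of $\psi$ and the truncated function $\chi_\ell\psi$ relative to the proof of Proposition~\ref{prop-ub}: there we used a competitor built from a minimizer of the reduced functional to bound $\mathcal E_0(\psi,\Ab;Q_\ell)$ from above; here we use $\chi_\ell\psi$ itself as (essentially) a competitor, and we exploit that $(\psi,\Ab)$ minimizes $\mathcal E$ to control the error incurred when we glue $\chi_\ell\psi$ back into $\Omega$. First I would perform the same gauge transformation as in Proposition~\ref{prop-ub}, so that $|\Ab-\Ab_0|\leq C\ell/\sqrt{\kappa H}$ on $Q_\ell$, by \eqref{eq-3D-est-g'}. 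Since $\chi_\ell$ is supported in $Q_\ell$ and equals $1$ on $Q_{\ell-1/\sqrt{\kappa H}}$, the function $\chi_\ell\psi$ lies in $H^1_0(Q_\ell;\C)$, which is the correct space for comparing $\mathcal E_0(\cdot,\Ab_0;Q_\ell)$ — after converting $\Ab$ to $\Ab_0$ — with the reduced problem.

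The main computation is the replacement of $\Ab$ by $\Ab_0$ in the kinetic term. Writing $(\nabla-i\kappa H\Ab)(\chi_\ell\psi)=(\nabla-i\kappa H\Ab_0)(\chi_\ell\psi)-i\kappa H(\Ab-\Ab_0)\chi_\ell\psi$, I would expand the square and use the elementary inequality $|a+b|^2\leq(1+\delta)|a|^2+(1+\delta^{-1})|b|^2$ to get
\begin{equation*}
\int_{Q_\ell}|(\nabla-i\kappa H\Ab_0)(\chi_\ell\psi)|^2\,dx\leq(1+\delta)\int_{Q_\ell}|(\nabla-i\kappa H\Ab)(\chi_\ell\psi)|^2\,dx+C\delta^{-1}(\kappa H)^2\int_{Q_\ell}|\Ab-\Ab_0|^2|\psi|^2\,dx\,.
\end{equation*}
The last integral is controlled exactly as the first term of $r_0(\kappa)$ in Proposition~\ref{prop-ub}, using $|\Ab-\Ab_0|\leq C\ell/\sqrt{\kappa H}$ and the $L^\infty$ bound \eqref{eq-psi<1-b}, giving a contribution $\leq C\delta^{-1}\ell^4\kappa^2[\,1-H/\kappa\,]$, i.e. $\leq C\delta^{-1}\ell^2\kappa[\kappa-H]\cdot\ell^2/|Q_\ell|^{-1}$ after dividing by $|Q_\ell|=\ell^2$. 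Next I must pass from $\chi_\ell\psi$ to $\psi$ inside the kinetic, quadratic and quartic terms: the difference is entirely supported in the thin frame $Q_\ell\setminus Q_{\ell-1/\sqrt{\kappa H}}$, of area $\approx\ell/\sqrt{\kappa H}$. The cross terms and the $|\nabla\chi_\ell|^2|\psi|^2$ term are handled by $|\nabla\chi_\ell|\leq c\sqrt{\kappa H}$, the bound $|(\nabla-i\kappa H\Ab)\psi|\leq C\kappa$ from \eqref{eq-hc2-FoHe2}, and again \eqref{eq-psi<1-b}; this is the same integration-by-parts bookkeeping as in the second half of $r_0(\kappa)$ in Proposition~\ref{prop-ub}, and it yields a frame contribution of order $\ell\kappa[\,1-H/\kappa\,]$, i.e. $\leq C\ell^{-1}[\kappa-H]$ after dividing by $\ell^2$. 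Collecting terms gives exactly the stated bound $(1+\delta)\mathcal E_0(\psi,\Ab;Q_\ell)/|Q_\ell|+C(\delta^{-1}\ell^2\kappa+\delta\kappa+\ell^{-1})[\kappa-H]$, where the $\delta\kappa[\kappa-H]$ term absorbs the lower-order pieces coming from the $(1+\delta)$ factors multiplying the $-\kappa^2|\psi|^2$ and $\frac{\kappa^2}{2}|\psi|^4$ terms, using once more that $\kappa^2|\psi|^2\leq C\kappa^2[\,1-H/\kappa\,]=C\kappa[\kappa-H]$ on $Q_\ell\subset\Omega_\kappa$.

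The step I expect to be most delicate is the handling of the sign-indefinite terms $-\kappa^2|\psi|^2$ and $\frac{\kappa^2}{2}|\psi|^4$ when one inserts the factor $(1+\delta)$ coming from the kinetic-term estimate: one cannot simply write $\mathcal E_0(\chi_\ell\psi,\Ab_0;Q_\ell)\leq(1+\delta)[\text{kinetic}]+[\text{potential}]$ and then recombine, because the potential terms are not those of $\chi_\ell\psi$. The clean way is to keep the structure $\mathcal E_0(\chi_\ell\psi,\Ab_0;Q_\ell)=\mathcal E_0(\chi_\ell\psi,\Ab;Q_\ell)+[\text{gauge error}]$ first, then $\mathcal E_0(\chi_\ell\psi,\Ab;Q_\ell)=\mathcal E_0(\psi,\Ab;Q_\ell)+[\text{frame error}]$, and only afterwards apply the $(1+\delta)$ splitting to the single kinetic term of $\chi_\ell\psi$, compensating the defect $-\delta\int_{Q_\ell}|(\nabla-i\kappa H\Ab)(\chi_\ell\psi)|^2$ — which is negative and hence harmless as an upper bound — against nothing, and bounding the potential discrepancies separately by the $L^\infty$ estimate. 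Since $(\psi,\Ab)$ is a minimizer, $\mathcal E_0(\psi,\Ab;Q_\ell)\leq C\kappa[\kappa-H]\cdot\ell^2$ is not needed as an input but the a~priori bounds \eqref{eq-hc2-FoHe1}–\eqref{eq-hc2-FoHe2} and \eqref{eq-psi<1-b} are, exactly as in Proposition~\ref{prop-ub}; no genuinely new estimate is required, which is why the proof is asserted to be "similar."
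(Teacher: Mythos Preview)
Your computational approach is correct and is exactly what the paper means by ``similar to Proposition~\ref{prop-ub}'': the gauge estimate \eqref{eq-3D-est-g'}, the Cauchy--Schwarz splitting with parameter $\delta$ on the kinetic term, the IMS-type identity $|(\nabla-i\kappa H\Ab)(\chi_\ell\psi)|^2=\chi_\ell^2|(\nabla-i\kappa H\Ab)\psi|^2+|\nabla\chi_\ell|^2|\psi|^2$, and the $L^\infty$ bound \eqref{eq-psi<1-b} on the thin frame $Q_\ell\setminus Q_{\ell-1/\sqrt{\kappa H}}$ yield precisely the three error terms. One small correction to your framing: the inequality is purely local and requires no competitor or gluing argument---the minimality of $(\psi,\Ab)$ enters only through the a~priori bounds of Lemma~\ref{lem-hc2-FoHe} and Theorem~\ref{thm:psi;1-b}, as you yourself observe at the end, so the opening talk of ``gluing $\chi_\ell\psi$ back into $\Omega$'' and ``exploiting that $(\psi,\Ab)$ minimizes $\mathcal E$'' should be dropped.
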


\begin{rem}\label{rem:ub}
Let $R_0>0$, $R_1>0$, and $A$ and $B$  be two functions of $\kappa$
such that,
$$R_0\leq A\leq R_1\kappa^{1/4}\quad {\rm and} \quad 1\leq B\ll \mu\,,$$
where $\mu$ is as in \eqref{eq-as-H1}-\eqref{eq-as-H2}.

 The choice
$\delta=B\kappa^{-1/2}$ and $
\ell=\displaystyle\frac{[\hskip-0.05cm[\, A\kappa^{-1/2}\sqrt{\kappa
H} \,]\hskip-0.05cm]}{\sqrt{2\pi}\,\sqrt{\kappa H}}\approx
A\kappa^{-1/2}$ makes the error terms in Propositions~\ref{prop-ub}
and \ref{ub:A-A0} of order $o\Big([\kappa -H]^2\Big)$. Here
$[\hskip-0.05cm[\,\cdot\,]\hskip-0.05cm]$ is the floor function
(integer part). The choice of $\ell$ forces $R=\ell\sqrt{\kappa H}$
to satisfy $R^2\in 2\pi\mathbb N$. This condition is needed to use
the results of Section~\ref{sec:periodic}.

The above choice  explains the assumption made on $\ell$ in
Theorem~\ref{thm:main}.
\end{rem}

In the sequel, we suppose that the parameters $\delta$ and $\ell$
are selected as in Remark~\ref{rem:ub}.

\begin{thm}\label{thm:loc-en}
Let $\rho\in(0,1)$. Suppose that the magnetic field $H$ satisfies
\eqref{eq-as-H1} and \eqref{eq-as-H2}.  Let $(\psi,\Ab)\in
H^1(\Omega;\C)\times H^1(\Omega;\R^2)$ be a minimizer of
\eqref{eq-3D-GLf} and $Q_{\ell}\subset\Omega$  a square of side
length $\ell$  such that,
$$ Q_\ell\subset \Omega_{\kappa,\rho}=\{x\in\Omega~:~{\rm
dist}(x,\partial\Omega)\geq \kappa^{-1+\rho}\}\,,$$ and $\ell$  given in Remark~\ref{rem:ub}. As
$\kappa\to\infty$, there holds:
\begin{enumerate}
\item
$$\frac1{|Q_\ell|}\mathcal E_0(\psi,\Ab;Q_\ell)=E_{\rm
Ab}[\kappa-H]^2+o\big([\kappa-H]^2\big)\,.$$
\item
$$\frac1{|Q_\ell|}\int_{Q_\ell}|\psi|^4\,dx=-2E_{\rm
Ab}\left[1-\frac{H}{\kappa}\right]^2+o\left(\left[1-\frac{H}{\kappa}\right]^2\right)\,.$$
\item If $\chi_\ell$ is the cut-off function in
Proposition~\ref{ub:A-A0}, then
$$\frac1{|Q_\ell|}\int_{Q_\ell}|\chi_\ell\psi|^4\,dx=-2E_{\rm
Ab}\left[1-\frac{H}{\kappa}\right]^2+o\left(\left[1-\frac{H}{\kappa}\right]^2\right)\,.$$
\end{enumerate}
\end{thm}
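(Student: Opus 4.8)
The plan is to establish part~(1) first, and then to read off parts~(2) and (3) from it by elementary identities together with the a priori bounds of Section~\ref{hc2-sec-preliminaries}.

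\emph{Upper bound in (1).} First I would apply Proposition~\ref{prop-ub} with $\delta$ and $\ell$ chosen as in Remark~\ref{rem:ub}. Then $R=\sqrt{\kappa H}\,\ell\to\infty$ with $R^2/(2\pi)\in\mathbb N$, the error term $C(\delta^{-1}\ell^2\kappa+\delta\kappa+\ell^{-1})[\kappa-H]$ is $o([\kappa-H]^2)$, and $\delta\to0$; since $c(R)/R^2\to E_{\rm Ab}$ by Theorem~\ref{thm-AS}, this gives $\frac1{|Q_\ell|}\mathcal E_0(\psi,\Ab;Q_\ell)\le E_{\rm Ab}[\kappa-H]^2+o([\kappa-H]^2)$, and, as no quantity in this estimate depends on the position of $Q_\ell$, it holds uniformly over all admissible squares $Q_\ell\subset\Omega_\kappa$.

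\emph{Lower bound in (1).} I would record two global facts: testing the first equation of \eqref{eq-hc2-GLeq} against $\overline\psi$ and integrating over $\Omega$ (the boundary term dropping by the Neumann condition) gives the identity $\mathcal E_0(\psi,\Ab;\Omega)=-\tfrac{\kappa^2}{2}\int_\Omega|\psi|^4\,dx$, which combined with \eqref{eq-FKam'} and with $\kappa\ll[\kappa-H]^2$ in the regime \eqref{eq-as-H1}--\eqref{eq-as-H2} yields $\mathcal E_0(\psi,\Ab;\Omega)=E_{\rm Ab}[\kappa-H]^2|\Omega|+o([\kappa-H]^2)$. Fix an admissible $Q_\ell$ and tile the bulk by a grid of squares of side $\ell$ containing $Q_\ell$; by additivity of $\mathcal E_0$ over disjoint sets, $\mathcal E_0(\psi,\Ab;\Omega)=\mathcal E_0(\psi,\Ab;Q_\ell)+\sum_{Q\ne Q_\ell}\mathcal E_0(\psi,\Ab;Q)+\mathcal E_0(\psi,\Ab;\mathcal R)$, where $\mathcal R$ (the boundary layer $\{\dist(x,\partial\Omega)\lesssim\kappa^{-1/4}\}$ together with the gaps of the tiling) has area $o(1)$. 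Applying the uniform upper bound to every $Q\ne Q_\ell$ and rearranging gives $\mathcal E_0(\psi,\Ab;Q_\ell)\ge\mathcal E_0(\psi,\Ab;\Omega)-(|\Omega|-|Q_\ell|)(E_{\rm Ab}+o(1))[\kappa-H]^2-\mathcal E_0(\psi,\Ab;\mathcal R)$, so everything reduces to $\mathcal E_0(\psi,\Ab;\mathcal R)\le o([\kappa-H]^2)$: the gaps lie well inside $\Omega_\kappa$ (shrink the tiled set to $\{\dist(x,\partial\Omega)\ge\kappa^{-\beta}\}$ with $\beta$ slightly below $1/4$) and are covered by $O(1/\ell)$ further squares of side $\asymp\ell$ still obeying the uniform bound, contributing $O([\kappa-H]^2\ell)=o([\kappa-H]^2)$, while on the boundary layer the minimizer's energy is $O(\kappa)=o([\kappa-H]^2)$, which comes out of the localized form of the surface energy estimate of \cite{FK-am}. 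Together with the upper bound this proves (1).

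\emph{Parts (2) and (3).} Integrating the first equation of \eqref{eq-hc2-GLeq} against $\overline\psi$ over $Q_\ell$ gives the local identity $\mathcal E_0(\psi,\Ab;Q_\ell)=-\tfrac{\kappa^2}{2}\int_{Q_\ell}|\psi|^4\,dx+\int_{\partial Q_\ell}{\rm Re}(\overline\psi\,\nu\cdot(\nabla-i\kappa H\Ab)\psi)\,ds$, so by (1) part~(2) follows once the boundary integral is $o([\kappa-H]^2|Q_\ell|)$. The crude bounds $|\psi|\le C[1-\tfrac H\kappa]^{1/2}$ on $\Omega_\kappa$ (Theorem~\ref{thm:psi;1-b}) and $|(\nabla-i\kappa H\Ab)\psi|\le C\kappa$ (Lemma~\ref{lem-hc2-FoHe}) are not enough, so I would average over the concentric squares $Q_{\ell'}$ with $\ell'$ ranging over the admissible (quantized) values in a shrinking window around $\ell$: the co-area formula together with $\int_{Q_{\ell'}}|(\nabla-i\kappa H\Ab)\psi|^2\le C\kappa[\kappa-H]\ell^2$ — itself a consequence of (1) applied to a nearby quantized square and of Theorem~\ref{thm:psi;1-b} — produces some $\ell'$ with boundary integral $o([\kappa-H]^2\ell^2)$; applying (1) and the local identity on that $Q_{\ell'}$ gives the $|\psi|^4$-average there, and since $|Q_\ell\triangle Q_{\ell'}|=o(|Q_\ell|)$ and $|\psi|^4\le C[1-\tfrac H\kappa]^2$ on $\Omega_\kappa$, carrying the estimate back to $Q_\ell$ costs only $o([1-\tfrac H\kappa]^2|Q_\ell|)$. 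Part~(3) is then immediate from (2): since $0\le\chi_\ell\le1$ and $\chi_\ell\equiv1$ on $Q_{\ell-1/\sqrt{\kappa H}}$, the integrand $|\psi|^4-|\chi_\ell\psi|^4$ is supported in the frame $Q_\ell\setminus Q_{\ell-1/\sqrt{\kappa H}}$, of area $O(\ell/\sqrt{\kappa H})$, on which $|\psi|^4\le C[1-\tfrac H\kappa]^2$, so $\frac1{|Q_\ell|}\int_{Q_\ell}(|\psi|^4-|\chi_\ell\psi|^4)\,dx\le C[1-\tfrac H\kappa]^2/(\sqrt{\kappa H}\,\ell)=o([1-\tfrac H\kappa]^2)$.

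\emph{Main obstacle.} The delicate step is the lower bound in (1). The natural local comparison with the periodic ground state energy through Theorem~\ref{thm-mp=c} is too lossy here, because $(1-\tfrac H\kappa)^2R^2\to\infty$ makes the error term $C\sigma^{-3}(1-\tfrac H\kappa)^2R^4$ swamp $c(R)$; one is therefore forced into the global argument above, and the real work is the control of $\mathcal E_0(\psi,\Ab;\mathcal R)$ — i.e. knowing that the minimizer's surface energy is negligible next to $[\kappa-H]^2$ and that the gaps of the tiling can be re-covered by squares still satisfying the uniform per-square upper bound. (Alternatively, once the approximation of $\psi$ by a periodic Landau eigenfunction alluded to in Section~\ref{hc2-sec:int} is in hand, this lower bound drops out of it directly.) The boundary-term estimate needed in (2) is a softer manifestation of the same phenomenon.
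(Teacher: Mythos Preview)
Your global tiling argument for the lower bound in (1) has a fatal scaling problem. The global formula you invoke, $\mathcal E_0(\psi,\Ab;\Omega)=E_{\rm Ab}[\kappa-H]^2|\Omega|+o([\kappa-H]^2)$, carries an \emph{absolute} remainder $o([\kappa-H]^2)$, not one proportional to the area of a single small square. After your rearrangement you arrive at
\[
\mathcal E_0(\psi,\Ab;Q_\ell)\ \geq\ E_{\rm Ab}[\kappa-H]^2|Q_\ell|\ -\ o\big([\kappa-H]^2\big)\,,
\]
and dividing by $|Q_\ell|=\ell^2\to0$ turns the error into $o([\kappa-H]^2)/\ell^2$, which is \emph{not} $o([\kappa-H]^2)$: with $\ell\approx A\kappa^{-1/2}$ the ratio to the main term is of order $\kappa/A^2\to\infty$. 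A single square of side $\ell\ll1$ is simply invisible against the global remainder coming from \eqref{eq-FKam}--\eqref{eq-FKam'}, so no tiling argument built on that formula can localize the energy to $Q_\ell$.

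The paper obtains the lower bound locally, by exactly the route you reject. Proposition~\ref{ub:A-A0} rearranges to $\mathcal E_0(\psi,\Ab;Q_\ell)\geq(1+\delta)^{-1}\mathcal E_0(\chi_\ell\psi,\Ab_0;Q_\ell)-o([\kappa-H]^2)|Q_\ell|$; since $\chi_\ell\psi\in H^1_0(Q_\ell)$, the rescaled function lies in $H^1_0(K_R)$ and hence $\mathcal E_0(\chi_\ell\psi,\Ab_0;Q_\ell)\geq(\kappa/H)\,m_0(H/\kappa,R)\geq(\kappa/H)\,m_{\rm p}(H/\kappa,R)$, after which Theorem~\ref{thm-mp=c} supplies $[1-b]_+^2c(R)$ with a controllable error. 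This is what the paper means by ``collect the estimates of Propositions~\ref{prop-ub} and~\ref{ub:A-A0}''. Your claim that $(1-H/\kappa)^2R^2\to\infty$ necessarily is not correct: this quantity is $\approx\mu^2A^2/\kappa$, and the freedom in Remark~\ref{rem:ub} to take $A$ only slightly above $\mu$ keeps it small. For part~(2) the paper bounds the boundary integral directly by $\mathcal O(\ell\kappa[1-H/\kappa]^{1/2})$ via \eqref{eq-psi<1-b} and \eqref{eq-hc2-FoHe2}, with no averaging; your part~(3) matches the paper.
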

\begin{proof}
We collect the estimates of Proposition~\ref{prop-ub} and
\ref{ub:A-A0} together with the discussion in Remark~\ref{rem:ub} to
write,
$$\frac1{|Q_\ell|}\mathcal
E_0(\psi,\Ab;Q_\ell)=[\kappa-H]^2\left(\frac{c(R)}{R^2}\big(1+o(1)\big)\right)+o\big([\kappa-H]^2\big)\,.$$
Theorem~\ref{thm-AS} tells us that
$\displaystyle\frac{c(R)}{R^2}=E_{\rm Ab}+o(1)$. Thus,
$$\frac1{|Q_\ell|}\mathcal
E_0(\psi,\Ab;Q_\ell)=[\kappa-H]^2\,E_{\rm
Ab}+o\big([\kappa-H]^2\big)\,.$$
Next we multiply the first Ginzburg-Landau equation in
\eqref{eq-hc2-GLeq} by $\overline\psi$ and integrate by parts over
the square $Q_\ell$ to get,
$$-\frac{\kappa^2}2\int_{Q_\ell}|\psi|^4\,dx=\mathcal
E_0(\psi,\Ab;Q_\ell) +\int_{\partial Q_\ell}\nu\cdot(\nabla-i\kappa
H\Ab)\psi\,\overline{\psi}\,dx\,.$$ Thanks to the estimates in  \eqref{eq-psi<1-b}, \eqref{eq-hc2-FoHe2} and the choice of $\ell$ in Remark~\ref{rem:ub},
the boundary term is,
$$\int_{\partial Q_\ell}\nu\cdot(\nabla-i\kappa
H\Ab)\psi\,\overline{\psi}\,dx=\mathcal
O\left(\ell\kappa\left[1-\frac{H}{\kappa}\right]^{1/2}\right)=o\left(\ell^2[\kappa-H]^2\right)\,.$$
Thus,
$$-\frac1{|Q_\ell|}\frac{\kappa^2}2\int_{Q_\ell}|\psi|^4\,dx=\frac1{|Q_\ell|}\mathcal
E_0(\psi,\Ab;Q_\ell)+o\big([\kappa-H]^2\big)=[\kappa-H]^2\,E_{\rm
Ab}+o\big([\kappa-H]^2\big)\,.$$ Finally, the assumption
 on the support of the function $\chi_\ell$, the bound
\eqref{eq-psi<1-b} and the choice of $\ell$ together give us,
$$\int_{Q_\ell}|\chi_\ell\psi|^4\,dx=\int_{Q_\ell}|\psi|^4\,dx+\mathcal
O\left(\frac{\ell}{\sqrt{\kappa H}}\left[1-\frac{H}{\kappa}\right]^2\right)=o\left(\ell^2\left[1-\frac{H}{\kappa}\right]^2\right)\,.$$
\end{proof}

The next result is an extension of the result in \cite{Al}. The
improvement is that the result here holds for an extended regime of
$H$.

\begin{thm}\label{thm:Al}
Let $\rho\in(0,1)$. Suppose that the magnetic field $H$ satisfies
\eqref{eq-as-H1} and \eqref{eq-as-H2}. There exist positive
constants $C$ and $\kappa_0$ such that the following is true.

Let $\kappa$ satisfy $\kappa\geq \kappa_0$. Let $(\psi,\Ab)\in
H^1(\Omega;\C)\times H^1(\Omega;\R^2)$ be a minimizer of
\eqref{eq-3D-GLf}, and $Q_{\ell}\subset\Omega$  a square of side
length $\ell$ and center $a_j$ such that,
$$ Q_\ell\subset \Omega_{\kappa,\rho}=\{x\in\Omega~:~{\rm
dist}(x,\partial\Omega)\geq \kappa^{-1+\rho}\}\,.$$ Let $\chi_\ell$ be the function in Proposition~\ref{ub:A-A0}. Define the function
$$v(x)=\left(\chi_\ell\psi\right)\Big(a_j+\frac{x}{\sqrt{\kappa H}}\Big)\,,\quad \big(\,x\in
K_R=\,(-R/2,R/2)\,\times\,(-R/2,R/2)\,\big)\,.$$ There holds,
$$\|v-\Pi_1(v)\|_{L^p(K_R)}\leq
C\sqrt{1-\frac{H}{\kappa}}\,\|v\|_{L^2(K_R)}\,,\quad
(p\in\{2,4\})\,,$$ and
\begin{align*}
\mathcal E_0(\psi,\Ab_0;Q_\ell)&\geq\int_{K_R}\Big(\left(1-\frac{\kappa}{H}\right)|\Pi_1(v)|^2+\frac{\kappa}{2H}|v|^4\Big)\,dx\\
&\geq \left[1-\frac{\kappa}{H}\right]_+^2\bigg(\left(1+2\left(\sigma+1-\frac{\kappa}{H}\right)\right)c(R)-C\sigma^{-3}\left(1-\frac{H}{\kappa}\right)^2R^4\bigg)\,,\quad\big(\sigma\in(0,1/2)\big)\,.
\end{align*}
 Here $\Pi_1$ is the projection introduced in
Proposition~\ref{prop-hc2-poperator}.
\end{thm}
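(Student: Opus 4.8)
The plan is to transplant $\chi_\ell\psi$ into the fixed square $K_R$, identify the resulting energy with the reduced functional $G_{b,K_R}$ of Section~\ref{hc2-sec-lbp}, and then exploit the spectral gap of the periodic Landau operator $P_R$. After the gauge change of \eqref{eq-3D-est-g'} I translate $Q_\ell$ so that its centre $a_j$ becomes the origin, set $x=\sqrt{\kappa H}\,(y-a_j)$, and remove the constant part $\Ab_0(a_j)$ of the magnetic potential by a linear phase factor, which leaves $|v|$ unchanged. Because $\chi_\ell\in C_c^\infty(Q_\ell)$, the function $v$ lies in $H^1_0(K_R)$, hence extends by magnetic periodicity to an element of $E_R$ which is smooth (by interior regularity of $\psi$) and so belongs to $D(P_R)$; and by the choice of $\ell$ in Remark~\ref{rem:ub} one has $R^2=\kappa H\,\ell^2\in 2\pi\mathbb N$, so the results of Section~\ref{sec:periodic} apply. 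A change of variables then gives
\begin{equation*}
\mathcal E_0(\chi_\ell\psi,\Ab_0;Q_\ell)=\frac{\kappa}{H}\,G_{H/\kappa,K_R}(v)=\int_{K_R}\Big(|(\nabla-i\Ab_0)v|^2-\frac{\kappa}{H}|v|^2+\frac{\kappa}{2H}|v|^4\Big)\,\md x\,.
\end{equation*}

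Combining Theorem~\ref{thm:loc-en}(1), Proposition~\ref{ub:A-A0} and the choice of $\delta,\ell$ of Remark~\ref{rem:ub} shows that $\mathcal E_0(\chi_\ell\psi,\Ab_0;Q_\ell)<0$ for $\kappa$ large (indeed $\tfrac1{|Q_\ell|}\mathcal E_0(\chi_\ell\psi,\Ab_0;Q_\ell)\le E_{\rm Ab}\,[\kappa-H]^2(1+o(1))$ and $E_{\rm Ab}<0$). Hence $G_{H/\kappa,K_R}(v)<0$, which forces
\begin{equation*}
Q_R(v)=\int_{K_R}|(\nabla-i\Ab_0)v|^2\,\md x\ \le\ \frac{\kappa}{H}\,\|v\|_{L^2(K_R)}^2=(1+\gamma)\,\|v\|_{L^2(K_R)}^2\,,\qquad\gamma:=\frac{\kappa}{H}-1\,.
\end{equation*}
Since $\gamma=\big(1-\tfrac{H}{\kappa}\big)\big(1+o(1)\big)\in\big(0,\tfrac12\big)$ eventually and $R\to\infty$, Lemma~\ref{prop-hc2-poperator'} applied with $p=2$ and $p=4$ yields $\|v-\Pi_1(v)\|_{L^p(K_R)}\le C_p\sqrt{\gamma}\,\|v\|_{L^2(K_R)}\le C\sqrt{1-H/\kappa}\,\|v\|_{L^2(K_R)}$, which is the first assertion.

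For the energy lower bound I split $v=\Pi_1(v)+\Pi_2(v)$; these are orthogonal both in $L^2(K_R)$ and for the form $Q_R$, with $Q_R(\Pi_1v)=\|\Pi_1v\|_{L^2(K_R)}^2$ (since $L_R=\mathrm{Ker}(P_R-1)$) and $Q_R(\Pi_2v)\ge 3\|\Pi_2v\|_{L^2(K_R)}^2$ by Proposition~\ref{prop-hc2-poperator}. As $\tfrac{\kappa}{H}<3$ for $\kappa$ large, the contribution $\big(3-\tfrac{\kappa}{H}\big)\|\Pi_2v\|_{L^2(K_R)}^2$ is non-negative, whence
\begin{equation*}
\mathcal E_0(\chi_\ell\psi,\Ab_0;Q_\ell)\ \ge\ \Big(1-\frac{\kappa}{H}\Big)\|\Pi_1v\|_{L^2(K_R)}^2+\frac{\kappa}{2H}\|v\|_{L^4(K_R)}^4\,;
\end{equation*}
passing to $\mathcal E_0(\psi,\Ab_0;Q_\ell)$ on the left through Proposition~\ref{ub:A-A0} and $|\Ab-\Ab_0|\le C\ell/\sqrt{\kappa H}$ on $Q_\ell$ gives the first displayed inequality of the statement. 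For the second one I use $\mathcal E_0(\chi_\ell\psi,\Ab_0;Q_\ell)=\tfrac{\kappa}{H}G_{H/\kappa,K_R}(v)\ge\tfrac{\kappa}{H}\,m_0(H/\kappa,R)\ge\tfrac{\kappa}{H}\,m_{\rm p}(H/\kappa,R)$ (Proposition~\ref{prop-m0=mp}) together with the lower bound of Theorem~\ref{thm-mp=c}; converting $[1-b]_+$, $(1-b)$ and $\sigma$ into the $\kappa/H$-variables ($b=H/\kappa\to1$) is elementary, the lower-order adjustments being absorbed into the shift $\sigma\rightsquigarrow\sigma+1-\tfrac{\kappa}{H}$ and into the remainder $C\sigma^{-3}(1-H/\kappa)^2R^4$. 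Equivalently one may keep the displayed $\Pi_1$-bound, replace $\|v\|_{L^4(K_R)}^4$ by $\|\Pi_1(v)\|_{L^4(K_R)}^4$ via the $L^4$-estimate just proved, put $w=\Pi_1(v)\in L_R$, and optimise over $L_R$ by the scalar dilation $w\mapsto sw$, using $c(R)=\min_{L_R}F_R=-\tfrac12\sup_{w\in L_R\setminus\{0\}}\|w\|_{L^2(K_R)}^4/\|w\|_{L^4(K_R)}^4$.

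The main obstacle is this last step: upgrading the spectral-gap bound to one governed by the \emph{exact} Abrikosov constant $c(R)$ requires quantitative control of $\|v\|_{L^4(K_R)}^4-\|\Pi_1(v)\|_{L^4(K_R)}^4$ and of $Q_R(\Pi_2v)$, as well as of the replacements $\Ab\to\Ab_0$ and $\psi\to\chi_\ell\psi$, with the correct powers of $1-\tfrac{H}{\kappa}$ and of $R$ — precisely the kind of estimate behind Theorem~\ref{thm-mp=c}, and the source of the error term $C\sigma^{-3}(1-H/\kappa)^2R^4$. The rescaling, the gauge reduction to $\Ab_0$, and the negativity of $\mathcal E_0$ that triggers Lemma~\ref{prop-hc2-poperator'} are, by contrast, routine once Theorems~\ref{thm:loc-en}, \ref{thm-AS} and \ref{thm-mp=c} and Lemma~\ref{prop-hc2-poperator'} are available.
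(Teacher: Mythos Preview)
Your argument tracks the paper's proof closely: the rescaling and gauge reduction to $\Ab_0$, the negativity of $\mathcal E_0(\chi_\ell\psi,\Ab_0;Q_\ell)$ (from Propositions~\ref{prop-ub}--\ref{ub:A-A0}) triggering Lemma~\ref{prop-hc2-poperator'}, and the spectral splitting $v=\Pi_1v+\Pi_2v$ for the first energy inequality are exactly as in the paper.

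Two points deserve comment. First, your route (A) through $G_{H/\kappa,K_R}(v)\ge m_0\ge m_{\rm p}$ and Theorem~\ref{thm-mp=c} yields only $\mathcal E_0(\chi_\ell\psi,\Ab_0;Q_\ell)\ge B$; it does \emph{not} establish the chained bound $A\ge B$ for the intermediate quantity $A=\int_{K_R}\big((1-\kappa/H)|\Pi_1v|^2+\tfrac{\kappa}{2H}|v|^4\big)\,\md x$. That chain matters, because in the proof of Theorem~\ref{thm:main} it is precisely $A$ that is sandwiched between the upper bound of Propositions~\ref{prop-ub}--\ref{ub:A-A0} and the lower bound here. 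So ``equivalently'' is not quite right: only your route (B) proves the statement as written, and that is the paper's route.

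Second, in route (B) the one ingredient you do not name explicitly is the pointwise bound $|v|\le C\sqrt{1-H/\kappa}$ coming from Theorem~\ref{thm:psi;1-b}. The $L^4$ comparison you invoke gives
\[
\|v\|_{L^4(K_R)}^4\ \ge\ (1-\sigma)\|\Pi_1v\|_{L^4(K_R)}^4-C\sigma^{-3}\Big(1-\frac{H}{\kappa}\Big)^2\|v\|_{L^2(K_R)}^4\,,
\]
and it is the $L^\infty$ bound that converts $\|v\|_{L^2(K_R)}^4$ into $C(1-H/\kappa)^2R^4$, producing the stated remainder. After that, the paper rescales $\Pi_1v=\big((\kappa/H-1)/(1-\sigma+\gamma)\big)^{1/2}u$ with $u\in L_R$ and uses $F_R(u)\ge c(R)$, which is equivalent to your scalar-dilation characterisation of $c(R)$.
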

\begin{proof}
Applying a translation, we may suppose that the center of $Q_\ell$
is $a_j=0$ (this amounts to a gauge transformation). We may select
$\kappa$ sufficiently large so that $R=\ell\sqrt{\kappa H}$ lives in
any preassigned neighborhood of infinity. That way, we have
$$\frac{c(R)}{R^2}=E_{\rm Ab}\big(1+o(1)\big)\leq \frac{E_{\rm Ab}}2<0\,.$$
As a consequence, we get from Propositions~\ref{prop-ub} and
\ref{ub:A-A0} that,
$$\int_{Q_\ell}\Big(|\nabla-i\kappa H\mathbf
A_0)\chi_\ell\psi|^2-\kappa^2|\chi_\ell\psi|^2\Big)\,dx<0\,.$$ The change of variable $x\mapsto  \sqrt{\kappa
H}\,(x-a_j)$ yields,
$$\int_{K_R}\Big(|(\nabla-i\mathbf
A_0)v|^2-(1+\gamma)|v|^2\Big)\,dx<0\,,$$ with $\gamma=\frac{\kappa}{H}-1\approx 1-\frac{H}{\kappa}$. The first estimate of Theorem~\ref{thm:Al} follows by applying Lemma~\ref{prop-hc2-poperator'}.

Next we prove the remaining estimates of Theorem~\ref{thm:Al}.
Notice that the change of variable $x\mapsto \sqrt{\kappa
H}\,(x-a_j)$ and Proposition~\ref{prop-hc2-poperator} together tell
us,
\begin{align}
\mathcal E_0(\chi_\ell\psi,\Ab_0;Q_\ell)&=\int_{K_R}\left(|(\nabla-i\Ab_0)v|^2-\frac{\kappa}{H}|v|^2+\frac{\kappa}{2H}|v|^4\right)\,dx\nonumber\\
&\geq \int_{K_R}\Big(\left(1-\frac{\kappa}{H}\right)|\Pi_1(v)|^2+\frac{\kappa}{2H}|v|^4\Big)\,dx\,.\label{proof:FK}
\end{align}
Let $b=\kappa/H$. Recall that $\psi$ satisfies in $Q_\ell$ the
pointwise bound
$$|\psi|\leq C\left[1-\frac{H}{\kappa}\right]^{1/2}\approx
[b-1]^{1/2 }\,.$$ Consequently, $|v|\leq C[b-1]^{1/2}$. This is the
key estimate to finish the proof of Theorem~\ref{thm:Al}. The method
used is the same as that of \cite[Theorem~2.11]{FK-cpde}.

We established that $\|v-\Pi_1(v)\|_{L^4(K_R)}\leq
C\sqrt{1-\displaystyle\frac{\kappa}{H}}\,\|v\|_{L^2(K_R)}$. This
inequality gives us that,
$$\|v\|_{L^4(K_R)}\geq \|\Pi_1(v)\|_{L^4(K_R)}-C\sqrt{b-1}\|v\|_{L^2(K_R)}\,.$$
As a consequence, we get with a new constant $C$ and for all
$\sigma\in(0,1/2)$,
\begin{equation}\label{eq-u>pi1}
\|v\|^4_{L^4(K_R)}\geq (1-\sigma)\|\Pi_1(v)\|^4_{L^4(K_R)}-C\sigma^{-3}(b-1)^2\|v\|^4_{L^2(K_R)}\,.
\end{equation}
Using the pointwise bound of $v$, $|v|\leq C[b-1]^{1/2}$, we get
that,
$$
\|v\|^4_{L^4(K_R)}\geq (1-\sigma)\|\Pi_1(v)\|^4_{L^4(K_R)}-C\sigma^{-3}[1-b]^4R^4\,.
$$
We use this bound to get a lower bound of the term in
\eqref{proof:FK}. That way we get that,
\begin{multline*}
\int_{K_R}\Big(\left(1-\frac{\kappa}{H}\right)|\Pi_1(v)|^2+\frac{\kappa}{2H}|v|^4\Big)\,dx\\
\geq
\int_{K_R}\left(-(b-1)|\Pi_1(v)|^2+\frac12(1-\sigma+\gamma)|\Pi_1(v)|^4\right)\,dx
-C\sigma^{-3}[1-b]^4R^4\,,
\end{multline*}
where $\gamma=\frac{\kappa}{H}-1$. By introducing the new function
$u\in L_R$ as follows,
$$\Pi_1v=\left(\frac{[b-1]}{1-\sigma+\gamma}\right)^{1/2} u\,,$$
we get that
$$\int_{K_R}\left(-(b-1)|\Pi_1(v)|^2+\frac12(1-\sigma+\gamma)|\Pi_1(v)|^4\right)\,dx=\frac{[1-b]^2}{1-\sigma+\gamma}\,F_R(u)\,.$$
Using the lower bound $F_R(u)\geq c(R)$ finishes the proof of
Theorem~\ref{thm:Al}.
\end{proof}

%
%

\begin{proof}[Proof of Theorem~\ref{thm:main}]
Notice that \eqref{eq-as-H1}-\eqref{eq-as-H2} ensure that
$$\left[\frac{\kappa}{H}-1\right]=\left[1-\frac{H}{\kappa}\right]\big(1+o(1)\big)\,,\quad{\rm
as~}\kappa\to\infty\,.$$ Combining the results of Propositions~\ref{prop-ub}-\ref{ub:A-A0} and Theorem~\ref{thm:Al}, we get that,
\begin{align*}
\frac1{|Q_\ell|}\int_{K_R}\Big(\left(1-\frac{\kappa}{H}\right)|\Pi_1(v)|^2+\frac{\kappa}{2H}|v|^4\Big)\,dx&\leq [\kappa-H]^2\Big(
\frac{c(R)}{R^2}+o(1)\Big)\\
&=[\kappa-H]^2\Big(E_{\rm Ab}+o(1)\Big)\,.
\end{align*}
Notice that it is used the asymptotics in Theorem~\ref{thm-AS}.
Using the estimate $\|v-\Pi_1(v)\|_{L^2(K_R)}\leq
C\sqrt{1-\frac{H}{\kappa}}\,\|v\|_{L^2(K_R)}$, we can replace
$\|\Pi_1(v)\|_{L^2(K_R)}$ by $\|v\|_{L^2(K_R)}$ to leading order.
That way we get,
$$
\frac1{|Q_\ell|}\int_{K_R}\Big(\left(1-\frac{\kappa}{H}\right)\big(1+o(1)\big)|v|^2+\frac{\kappa}{2H}|v|^4\Big)\,dx\leq[\kappa-H]^2\Big(E_{\rm
Ab}+o(1)\Big)\,.
$$
Applying the change of variable $x\mapsto a_j+\frac{x}{\sqrt{\kappa
H}}$ and remembering the definition of $v$ in Theorem~\ref{thm:Al}
we get,
$$
\frac{\kappa H}{|Q_\ell|}\int_{Q_\ell}\Big(\left(1-\frac{\kappa}{H}\right)\big(1+o(1)\big)|\chi_\ell\psi|^2+\frac{\kappa}{2H}|\chi_\ell\psi|^4\Big)\,dx\leq[\kappa-H]^2\Big(E_{\rm
Ab}+o(1)\Big)\,.
$$
Theorem~\ref{thm:loc-en} tells us that
$\displaystyle\frac1{|Q_\ell|}\int_{Q_\ell}|\chi_\ell\psi|^4\,dx=-2E_{\rm
Ab}\left[1-\frac{H}{\kappa}\right]^2+o\left(\left[1-\frac{H}{\kappa}\right]^2\right)$.
Consequently, we get that,
$$
\frac{\kappa H}{|Q_\ell|}\int_{Q_\ell}\left(1-\frac{\kappa}{H}\right)\big(1+o(1)\big)|\chi_\ell\psi|^2\,dx\leq
E_{\rm
Ab}\left[\kappa-H\right]^2+[\kappa-H]^2\Big(E_{\rm
Ab}+o(1)\Big)\,.
$$
Remembering the assumptions \eqref{eq-as-H1}-\eqref{eq-as-H2} on
$H$, we deduce that,
\begin{equation}\label{eq:R-lb}
\frac{1}{|Q_\ell|}\int_{Q_\ell}\big(1+o(1)\big)|\chi_\ell\psi|^2\,dx\geq
-2E_{\rm
Ab}\left[1-\frac{H}{\kappa}\right]+o\left(\left[1-\frac{H}{\kappa}\right]\right)\,.
\end{equation}
Now we establish a matching upper bound.
We introduce the  parameters
$$\alpha=\left(1-\frac{H}\kappa\right)^{1/16}\,,\quad \epsilon=\left(1-\frac{H}\kappa\right)^{3/8}\,,
\quad \ell'=(\kappa-H)^{-1}\epsilon\quad{\rm and}\quad R'=\ell'\sqrt{\kappa H}\,.
$$
These parameters satisfy
$$
\left(1-\frac{H}{\kappa}\right)^2 R'^2\ll 1\,,\quad \kappa^{-1}\ll\ell'\ll \ell\,,\quad 1\ll R'\ll R\quad{\rm and}~(\ell')^{-2}\alpha^{-2}\left(1-\frac{H}\kappa\right)\ll [\kappa-H]^2\,.
$$
We cover the square $Q_\ell$ by  $N$ pariwise dsjoint squares $(\widetilde{Q}_{\ell',i})_i$ of side length $\ell'$.
These squares are constructed as follows. Then we replace every square $\widetilde{Q}_{\ell',i}$ by ${Q}_{\ell',i}$ with the same center but a slightly larger side-length $(1+\alpha)\ell'$ (see Figure~\ref{fig}).
\begin{figure} \label{fig} 
 \centering
 \includegraphics[scale=0.7]{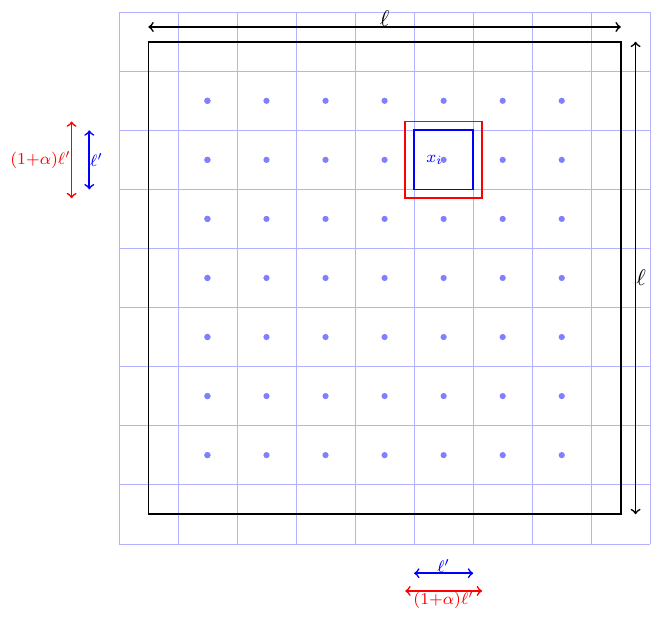}
\caption{The  square $Q_{\ell}$ decomposed into the small squares $\widetilde Q_{\ell',i}$. Note the representation of the square $\widetilde Q_{\ell',i}$ with center $x_i$ and the slightly larger square $Q_{\ell',i}$.}
\end{figure} 
The number $N$ satisfies
\begin{equation}\label{eq:Nb*}
\left| N-\frac{\ell^2}{\ell'^2}\right|\leq C\frac{\ell}{(\ell')^2}\,.
\end{equation}  
Consider a partition of unity $(g_i)$  satisfying in $Q_\ell$
$$\sum_{i}g_i=1\,,\quad \sum_i|\nabla g_i|^2\leq C(\ell')^{-2}\alpha^{-2}\,,\quad {\rm supp}~g_{i}\subset Q_{\ell',i}\,.$$
We have,
$$\begin{aligned}
0\geq E_{\rm Ab}[\kappa-H]^2+o([\kappa-H]^2)&\geq\mathcal{E}_{0}(\chi_\ell\psi,\Ab_0;Q_{\ell})\\
&\geq  \sum_{i}\mathcal E_0(g_i \chi_\ell\psi,\Ab_0;Q_{\ell',i})-\ell^2[\kappa-H]^2o(1)\,.
\end{aligned}$$
%
%
%
Let $N_+={\rm Card},\mathcal J_+$ and $N_-={\rm Card}\,\mathcal J_-$, where
$$\mathcal J_+=\{i~:~q(g_i\chi_\ell \psi,\Ab_0;Q_{\ell',i})> 0\}\,,\quad
\mathcal J_-=\{i~:~q(g_i\chi_\ell \psi,\Ab_0;Q_{\ell',i})\leq 0\}\,,$$
and
$$q(g_i\chi_\ell \psi,\Ab_0;Q_{\ell',i})=\int_{Q_{\ell',i}}\Big(|(\nabla-i\kappa H\Ab_0)h_{i}v|^2-\kappa^2|h_{i}v|^2\Big)\,dx\,.$$
 We have
$$
 \left| N_- -\frac{\ell^2\sqrt{\kappa H}}{\ell'^2}\right| \leq \frac{\ell^2\sqrt{\kappa H}}{\ell'^2}\,o(1)\,,\quad
and\quad N_+=N_-o(1)\,.
$$
Let $x_i$ denote the center of the square $Q_{\ell',i}$, $R'=\ell\sqrt{\kappa H}$ and $K_{R'}=(-R'/2,R'/2)^2$. We introduce the two functions
$$
h_i(x)=g\Big(x_j+\frac{x}{\sqrt{\kappa H}}\Big)\quad {\rm and}\quad v(x)=\left(\chi_\ell\psi\right)\Big(x_i+\frac{x}{\sqrt{\kappa H}}\Big)\,,\quad \big(\,x\in
K_R'\,\big)
$$
As in the proof of Theorem~\ref{thm:Al}, for all $i\in\mathcal J_-$, we have
\begin{align*}
\mathcal E_0(g_i\chi_\ell\psi,\Ab_0;Q_{\ell',i})&\geq\int_{K_R'}\Big(\left(1-\frac{\kappa}{H}\right)|\Pi_1(h_iv)|^2+\frac{\kappa}{2H}|h_iv|^4\Big)\,dx\\
&\geq \left[1-\frac{\kappa}{H}\right]_+^2\bigg(\left(1+2\left(\sigma+1-\frac{\kappa}{H}\right)\right)c(R')-C\sigma^{-3}\left(1-\frac{H}{\kappa}\right)^2(R')^4\bigg)\,,
\end{align*}
and
$\|h_iv-\Pi_1(h_iv)\|_{L^2(K_{R'})}\ll \|h_iv\|_{L^2(K_{R'})}$\,,
for all $\sigma\in(0,1/2)$. Note that our choice of $R'$ allows us to choose $\sigma\ll1$ such that $\sigma^{-3}\left(1-\frac{H}{\kappa}\right)^2(R')^4\ll (R')^2$. Thus, we get, for all $i\in\mathcal J_-$,
\begin{align*}
\mathcal E_0(g_i\chi_\ell \psi,\Ab_0;Q_{\ell',i})&\geq\int_{K_{R'}}\Big(\left(1-\frac{\kappa}{H}\right)|\Pi_1(h_iv)|^2+\frac{\kappa}{2H}|h_iv|^4\Big)\,dx\\
&\geq \left[1-\frac{\kappa}{H}\right]_+^2\bigg(c(R')+o\big((R')^2\big)\bigg)\,.
\end{align*}
We replace $\|\Pi(h_iv)\|_2$ by $\|h_iv\|_2$ and sum over $i\in\mathcal J_-$ to get,
$$\sum_{i\in \mathcal J_-}\int_{K_{R'}}\Big(\left(1-\frac{\kappa}{H}\right)|h_iv|^2+\frac{\kappa}{2H}|h_iv|^4\Big)\,dx\\
\geq N_-\left[1-\frac{\kappa}{H}\right]_+^2\bigg(c(R')+o\big((R')^2\big)\bigg)\,.
$$
Since $h_i^2\geq h_i^4$, $c(R')=(R')^2E_{\rm Ab}+o((R')^2)$, $N_-=N+o(N)$ and $N$ satisfies \eqref{eq:Nb*}, we get
$$
\sum_{i\in \mathcal J_-}\int_{K_{R'}}\Big(\left(1-\frac{\kappa}{H}\right)|h_iv|^2+\frac{\kappa}{2H}h_i^2|v|^4\Big)\,dx\\
\geq \ell^2\kappa H\sqrt{\kappa H}\left[1-\frac{\kappa}{H}\right]_+^2E_{\rm Ab}\bigg(1+o(1)\bigg)\,.
$$
Using the bound $|v|\leq C(1-\frac{H}{\kappa})^{1/2}$ and that the number $N_+$ of indices in $\mathcal J_+$ is equal to $o(N)$, we get that the sum over $i\in\mathcal J$ satisfies,
$$
\sum_{i\in \mathcal J}\int_{K_{R'}}\Big(\left(1-\frac{\kappa}{H}\right)|h_iv|^2+\frac{\kappa}{2H}h_i^2|v|^4\Big)\,dx\\
\geq \ell^2\kappa H\sqrt{\kappa H}\left[1-\frac{\kappa}{H}\right]_+^2E_{\rm Ab}\bigg(1+o(1)\bigg)\,.
$$
Now,  noting that $\sum_{i\in\mathcal J}h_i^2=1$ and performing a change of variable, we get 
$$\int_{Q_\ell}\Big(\left(1-\frac{\kappa}{H}\right)|\chi_\ell \psi|^2+\frac{\kappa}{2H}|\chi_\ell \psi|^4\Big)\,dx\\
\geq \ell^2\kappa H\sqrt{\kappa H}\left[1-\frac{\kappa}{H}\right]_+^2E_{\rm Ab}\bigg(1+o(1)\bigg)\,,$$
Using the asymptotics for $\|\chi_\ell\psi\|_4$ and a change of variables yields the following upper bound
\begin{equation}\label{eq:R-ub}
\frac{1}{|Q_\ell|}\int_{Q_\ell}|\chi_\ell\psi|^2\,dx\leq
-2E_{\rm
Ab}\left[1-\frac{H}{\kappa}\right]+o\left(\left[1-\frac{H}{\kappa}\right]\right)\,.
\end{equation}
We collect \eqref{eq:R-lb} and \eqref{eq:R-ub}, then we use 
the assumption on the support of the function $\chi_\ell$, the bound
\eqref{eq-psi<1-b} and the choice of $\ell$ to get,
$$\int_{Q_\ell}|\psi|^2\,dx=\int_{Q_\ell}|\chi_\ell\psi|^2\,dx+\mathcal
O\left(\frac{\ell}{\sqrt{\kappa H}}\left[1-\frac{H}{\kappa}\right]\right)=E_{\rm Ab}\ell^2\left[1-\frac{H}\kappa\right]+o\left(\ell^2\left[1-\frac{H}{\kappa}\right]\right)\,.$$
 This
finishes the proof of Theorem~\ref{thm:main}.
\end{proof}

\section*{Acknowledgements}
The author would like to thank K. Attar for his reading of the first
version of this paper. The research of the author is supported by a
 grant from Lebanese University.

\end{document}